\journal{Journal of \LaTeX\ Templates}
\newtheorem{definition}{Definition}[section]
\newtheorem{theorem}{Theorem}[section]
  \newtheorem{corollary}{Corollary}[section]
 \newtheorem{lemma}{Lemma}[section] 
  \newtheorem{proposition}{Proposition}[section]
  \newtheorem{example}{Example}[section]
    \newtheorem{rk}{Remark}[section]
\newcommand{\diag}{\mathop{\rm diag}\nolimits}
\newcommand {\Int}{\mathop{\rm int}\nolimits}
\providecommand{\norm}[1]{\lVert#1\rVert}
\begin{document}

\begin{frontmatter}

\title{ Algebraic and Geometric Properties of $\mathcal{L}^n_+$-Semipositive Matrices and  $\mathcal{L}^n_+$-Semipositive Cones}

\author{Aritra Narayan Hisabia} 
\author{Manideepa Saha \corref{mycorrespondingauthor}}
\address{Department of Mathematics, National Institute of Technology Meghalaya, Shillong 793003, India}
\cortext[mycorrespondingauthor]{Corresponding author}
\ead{manideepa.saha@nitm.ac.in}


%
%
%
%

\begin{abstract}Given a proper cone $K$ in the Euclidean space $\mathbb{R}^n$, a square matrix $A$ is said to be $K$-semipositive if there exists an $x\in K$ such that $Ax\in \text{int}(K)$, the topological interior of $K$. The paper aims to study algebraic and geometrical properties of $K$-semipositive matrices with  special emphasis on the self-dual proper Lorentz cone $\mathcal{L}^n_+=\{x\in \mathbb{R}^n:x_n\geq 0,\sum\limits_{i=1}^{n-1}x_{i}^2\leq x_n^2\}$. More specifically, we discuss a few necessary and  other sufficient algebraic conditions for $\mathcal{L}^n_+$-semipositive matrices. Also, we provide algebraic characterizations for diagonal and orthogonal  $\mathcal{L}^n_+$-semipositive matrices. Furthermore, given a square matrix $A$ and a proper cone $K$, geometric properties of the semipositive cone $\mathcal{K}_{A,K}=\{x\in K:~Ax\in K\}$ and the cone of $\mathcal{S}_{A,K}=\{x:Ax\in K\}$ are discussed in terms of their extremals. As $\mathcal{L}^n_+$ is an ellipsoidal cone, at last we find results for the  cones $\mathcal{K}_{A,\mathcal{L}^n_+}$ and $\mathcal{S}_{A,\mathcal{L}^n_+}$ to be ellipsoidal.
\end{abstract}
\begin{keyword}
Lorentz Cone, Proper cone, Semipositive matrix, Orthogonal matrix 
\MSC[2020] 	15B10; 15B48; 52A20
\end{keyword}

\end{frontmatter}


\section{Introduction}
All through the paper, we denote $\mathbb{R}^{m\times n}$ as the set of all matrices of order $m\times n$ with real entries, and the inequality signs, used for vectors/matrices, are meant to represent entry wise inequalities.   A \emph{cone} $K\subseteq\mathbb{R}^n$ has the properties that $K+K \subseteq K$ and $\alpha K \subseteq K$, for all $\alpha \geq 0$. We write $\Int(K) $ to represent the topological interior of the cone $K$, in the Euclidean space.  It is known that a set in $\mathbb{R}^n$ is called \emph{convex}, if it contains the line segment joining any two of its points. A \emph{proper cone} $K$ is  (convex) closed, pointed ($K\cap(-K)=\{0\}$), and solid (non-empty interior) cone. A proper cone $K$ in $\mathbb{R}^n$ always generates a \emph{ partial order}, defined as
\[x\overset{K}{\leq } y \text{ if and only if }y-x\in K\]
The dual of a cone $K\in \mathbb{R}^{n}$ is denoted by $K^{*}$ and  is defined as
\[K^{*}=\{x\in \mathbb{R}^{n}:\langle x,y\rangle \geq 0~\text{for all}~y\in K\}\]
where $\langle .,.\rangle$ denotes the standard inner product in the Euclidean space.

Given a closed convex cone $K$ in $\mathbb{R}^n$, a vector $x\in\mathbb{R}^n$ is called  an \emph{extremal} of $K$ if $0\overset{K}{\leq} y\overset{K}{\leq} x$ implies that $y=\alpha x$, for some $\alpha\geq 0$.  If a proper cone $K$ in $\mathbb{R}^n$, has  finitely many extremals, then it is known as a \emph{polyhedral cone} and,  a polyhedral cone with exactly $n$ extremals is called  as a {\em simplicial cone}. A matrix $A\in \mathbb{R}^{n\times n}$ is said to be \emph{$K$-monotone} if $Ax\in K$ implies $x\in K$.  In particular, if we choose $K=\mathbb{R}^{n}_{+}$ (the nonnegative orthant of $\mathbb{R}^{n}$), then $\mathbb{R}^{+}_{n}$-monotone matrices are the well-known monotone matrices~\cite{BerP94}.  Monotonicity and inverse positiveness are identical for square matrices~\cite{Col52}.  This interesting characterization of monotone matrices has generalized for $K$-monotone matrices, which is given in the following result and is used in our result in later section.
\begin{theorem}{\rm \cite{BerP94}}\label{monotone}
A matrix $A$ is $K$-monotone if and only if $A$ is non-singular and $A^{-1}\in \pi(K)=\{A\in\mathbb{R}^{n\times n}:~AK\subseteq K\}$,  the collection of $n\times n$ matrices that leave $K$ invariant..
\end{theorem}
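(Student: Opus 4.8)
The plan is to prove both implications directly from the definition of $K$-monotonicity, invoking only that $K$ is a pointed cone (closedness, solidity, and convexity are not needed here).

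For the sufficiency direction, I would assume that $A$ is non-singular with $A^{-1}K\subseteq K$, take any $x\in\mathbb{R}^n$ with $Ax\in K$, and simply write $x=A^{-1}(Ax)\in A^{-1}K\subseteq K$. This is a one-line substitution and shows $A$ is $K$-monotone.

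For the necessity direction, I would proceed in two steps. First, non-singularity: if $Ax=0$, then both $Ax=0$ and $A(-x)=0$ lie in $K$ since $0\in K$; applying $K$-monotonicity to each gives $x\in K$ and $-x\in K$, and pointedness ($K\cap(-K)=\{0\}$) forces $x=0$. Hence $A$ is injective and, being a square matrix, invertible. Second, the containment $A^{-1}K\subseteq K$: given $y\in K$, put $x=A^{-1}y$; then $Ax=y\in K$, so $K$-monotonicity yields $x\in K$, i.e.\ $A^{-1}y\in K$. Since $y\in K$ was arbitrary, $A^{-1}K\subseteq K$, that is, $A^{-1}\in\pi(K)$.

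There is no real obstacle here; the only point worth flagging is that it is precisely \emph{pointedness} of $K$ (not solidity or closedness) that is used to deduce non-singularity from $K$-monotonicity, while the remaining content of the equivalence is the pair of elementary substitutions above.
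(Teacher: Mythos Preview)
Your proof is correct and is essentially the standard argument for this classical fact. Note, however, that the paper does not supply its own proof of this statement: it is quoted from \cite{BerP94} and used as a black box, so there is no in-paper proof to compare against. Your observation that only pointedness of $K$ is needed (rather than the full proper-cone hypothesis) is accurate and worth keeping.
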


A matrix $A\in\mathbb{R}^{m\times n}$ is said to be {\em semipositive} if there is an $x>(\geq)0$ such that $Ax>0$, and any such vector $x$ is known as a {\em semipositivity vector} of the matrix $A$. Importance of semipositive matrices are found in various problems like linear complementarity problems, characterizing invertible $M$-matrices, stability of matrices, game theory, optimization problems etc. Semipositive matrices were initially studied by  Fielder and Pt\'ak in \cite{FieP66} and several interesting and important properties that include algebraic, geometric, and spectral properties of semipositive matrices, can be found in \cite{ChoKS18,DorGJT16,HiaS20,JohKS94,TsaS17,Tsa15,Wer94}. One of the important subclass of semipositive matrices is minimally semipositive matrices whose none of the column deleted submatrix is semipositive.  In \cite{JohKS94}, it is proved that  semipositive matrix with nonnegative left inverse, is minimally semipositive and its converse viz., a minimally semipositive  matrix has a nonnegative left inverse. For square matrices, the statement is equivalent to say that a square matrix is minimally semipositive if and only if it is inverse positive. A semipositive matrix $A$ is associated with the proper cone $K_A=\{x\geq 0: Ax\geq 0\}$, which is known as {\em semipositive cone}. In \cite{TsaS17,Tsa15} it is established that a semipositive cone is a proper polyhedral cone, and further self-duality of a semipositive cone was also discussed. A brief discussion of the extremals of a semipositive cone can be found in \cite{HiaS20}. The relation between semipositive matrices and other different classes of matrices associated with the linear complementarity problem, is analyzed in \cite{ChoKS18}.

Given two proper cones $K_1$, $K_2$, respectively in $\mathbb{R}^n$ and $\mathbb{R}^m$, the positivity and semipositivity properties of a matrix $A\in \mathbb{R}^{m\times n}$ associated with these cones, fascinated many researcher since $1970$, and this notions are defined as follows:
\begin{definition} A matrix $A\in\mathbb{R}^{n}$ s said to be
\begin{itemize}
\item[\em (i)] $(K_1,K_2)$-\emph{negative} if $A(K_1)\subseteq K_2$. The set of all $(K_1,K_2)$-\emph{negative} matrices is denoted $\pi(K_1,K_2)$. 
\item[\em (ii)]$(K_1,K_2)$-\emph{positive} if $A(K_1\setminus\{0\})\subseteq \Int(K_2)$.
\item[\em (iii)]$(K_1,K_2)$-\emph{semipositive} if if there exists $x\in K_1$, such that $Ax\in \Int(K_2)$, which is equivalent to say that $A$ is $(K_1,K_2)$-semipositive  if  there is an $x\in \Int(K_1)$, such that $Ax\in \Int(K_2)$. Such a vector $x$ is called a semipositivity vector of $A$ with respect to the cones $K_1$ and $K_2$. Throughout the paper, we simply write $x$ is a semipositivity vector of $A$, if cones are clear from the context. The set of all $(K_1,K_2)$-\emph{semipositive} matrices is denoted $S(K_1,K_2)$. 
\end{itemize}
\end{definition} 
 If $K_1=K_2=K$ in the above definition, we write simply $A$ is \emph{$K$-negative}, or,  \emph{$K$-positive} or  \emph{$K$-semipositive}, respectively, and we use $\pi(K)$ and $S(K)$ for $\pi(K_1,K_2)$ and $S(K_1,K_2)$, respectively. It is worthwhile to mention that the semipositive matrix, defined in the above paragraph, is the $\mathbb{R}^n_{+}$-semipositive matrix. In ~\cite{SchV70}, it is shown that $\pi(K_1,K_2)$ is a proper cone.  Also, one can observe that $\Int(\pi(K_1,K_2))$ is the collection of all $(K_1,K_2)$-positive  matrices. We refer to ~\cite{ChaJM18,SchV70,Tam92,Van68}, and the references therein, for the various properties of the cone $\pi(K_1,K_2)$.
 
We now state a well-known result associated with $(K_1,K_2)$-semipositive matrices, which is a generalization of the Theorem of Alternative~\cite{Man69}.
\begin{theorem}{\em \cite{BerP94}} \label{one_is_true}
Let $K_1$ and $K_2$ be two proper cones in $\mathbb{R}^n$ and $\mathbb{R}^m$, respectively, and let $A\in \mathbb{R}^{m\times n}$, then exactly one of the following is true.
\begin{itemize}
\item[\rm(i)]  $A$ is $(K_1,K_2)$-semipositive.
\item[\rm(ii)] There exists an $x(\neq 0)\in -K_2^*$ such that $A^Tx\in K_1^*$.
\end{itemize}
\end{theorem}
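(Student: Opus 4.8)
The plan is to prove the two alternatives are mutually exclusive by a direct pairing argument, and that at least one holds by a separating-hyperplane argument applied to the convex cone $A(K_1)$ and the open convex set $\Int(K_2)$.

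First I would show (i) and (ii) cannot both hold. Suppose $A$ is $(K_1,K_2)$-semipositive, say $Au\in\Int(K_2)$ with $u\in K_1$, and suppose also there is $x\neq 0$ with $x\in -K_2^*$ and $A^Tx\in K_1^*$. Since $u\in K_1$ and $A^Tx\in K_1^*$, we get $\langle x,Au\rangle=\langle A^Tx,u\rangle\geq 0$. On the other hand, $-x\in K_2^*\setminus\{0\}$ and $Au\in\Int(K_2)$; here I would use the standard fact that a nonzero dual vector pairs strictly positively with every interior point (if $\langle -x,Au\rangle=0$, then $Au-\varepsilon(-x)\in K_2$ for small $\varepsilon>0$, which forces $-\varepsilon\|x\|^2\geq 0$, impossible). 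Hence $\langle x,Au\rangle<0$, a contradiction, so at most one of (i), (ii) holds.

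Next, assume (i) fails; I must exhibit the vector of (ii). The failure of (i) says precisely that the convex cone $C:=A(K_1)\subseteq\mathbb{R}^m$ (nonempty, since $0\in K_1$) is disjoint from the nonempty open convex set $\Int(K_2)$. Two disjoint nonempty convex subsets of a finite-dimensional space can be separated by a hyperplane, so there exist $y\neq 0$ and $\alpha\in\mathbb{R}$ with $\langle y,z\rangle\leq\alpha\leq\langle y,w\rangle$ for all $z\in C$ and all $w\in\Int(K_2)$. Because $C$ is a cone containing $0$, rescaling $z$ shows $\langle y,z\rangle\leq 0$ on $C$ and $\alpha\geq 0$; because $\overline{\Int(K_2)}=K_2$ is a cone, continuity yields $\langle y,w\rangle\geq 0$ for all $w\in K_2$, i.e.\ $y\in K_2^*$.

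Finally I would set $x:=-y$. Then $x\neq 0$ and $x\in -K_2^*$. Moreover, $\langle y,Av\rangle\leq 0$ for every $v\in K_1$ (as $Av\in C$) rewrites as $\langle A^Tx,v\rangle=\langle -A^Ty,v\rangle\geq 0$ for all $v\in K_1$, that is, $A^Tx\in K_1^*$; this is exactly alternative (ii). The one genuinely nontrivial ingredient is the separation theorem for disjoint convex sets, combined with the observation that both sets are essentially cones, which upgrades the affine separation to the homogeneous inequalities $\langle y,\cdot\rangle\leq 0$ on $C$ and $\langle y,\cdot\rangle\geq 0$ on $K_2$; the rest is bookkeeping with the definition of the dual cone.
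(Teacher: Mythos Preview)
Your argument is correct: the mutual-exclusivity step via the strict pairing $\langle -x,Au\rangle>0$ for nonzero $-x\in K_2^*$ and $Au\in\Int(K_2)$ is sound, and the existence step via separating the convex cone $A(K_1)$ from the open convex set $\Int(K_2)$, then using the cone structure to force the separating functional into $K_2^*$ and $-A^Ty$ into $K_1^*$, is the standard route to this Gordan-type alternative.

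There is nothing to compare with, however, because the paper does not prove this theorem: it is quoted as a known result from Berman and Plemmons~\cite{BerP94} and used as a tool (e.g.\ in Example~2.3 and Proposition~\ref{thn4.2}). So your proposal is not an alternative to the paper's proof but rather a self-contained justification of a cited fact. One minor remark: in your separation step you could note explicitly that $0\in K_2$ forces $\alpha\leq 0$, hence $\alpha=0$, which is what actually gives $\langle y,\cdot\rangle\geq 0$ on all of $K_2$ rather than merely $\geq\alpha$; your phrasing ``because $\overline{\Int(K_2)}=K_2$ is a cone'' gestures at this but does not quite say it.
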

 In 2018, the class of $(K_1,K_2)$-semipositive matrices are revisited in~\cite{ChaJM18} for finite dimensional real Hilbert spaces, and furnished many interesting results that include characterization of $K$-semipositive linear operators (matrices). We now state the following result due to~\cite{ChaJM18} that of our particular interest for further study.
 \begin{theorem}{\em \cite{ChaJM18}}\label{thm2-int} Let $K_1$ and $K_2$ be two proper cones in $\mathbb{R}^n$ and $\mathbb{R}^m$, respectively, and let $A\in\mathbb{R}^{m\times n}$. Then $A\in S(K_1,K_2)$ if and only if there exists an invertible matrix $\widetilde{X}\in\mathbb{R}^{n\times n}$ that is $K_1$-positive and $\widetilde{Y}\in\mathbb{R}^{m\times n}$  that is $(K_1,K_2)$-positive with $A=\widetilde{Y}\widetilde{X}^{-1}$.
\end{theorem}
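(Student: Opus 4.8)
\emph{Proof proposal.} The plan is to establish the two implications separately; the only ingredient beyond the definitions is the observation recorded above that $\Int(\pi(K_1,K_2))$ is exactly the set of $(K_1,K_2)$-positive matrices (so in particular $\Int(\pi(K_1))$ is the set of $K_1$-positive matrices), together with the standard fact that for a proper cone $\Int(K_1^*)=\{u:\langle u,v\rangle>0\text{ for all }v\in K_1\setminus\{0\}\}$.

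For the ``if'' direction I would argue directly: given $A=\widetilde Y\widetilde X^{-1}$ with $\widetilde X$ invertible and $K_1$-positive and $\widetilde Y$ being $(K_1,K_2)$-positive, fix any $w\in K_1\setminus\{0\}$ and set $x=\widetilde Xw$. Then $x\in\Int(K_1)$ because $\widetilde X$ is $K_1$-positive, and $Ax=\widetilde Y\widetilde X^{-1}\widetilde Xw=\widetilde Yw\in\Int(K_2)$ because $\widetilde Y$ is $(K_1,K_2)$-positive; hence $A\in S(K_1,K_2)$.

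For the ``only if'' direction I would use the equivalent form of the definition to pick $x_0\in\Int(K_1)$ with $y_0:=Ax_0\in\Int(K_2)$, then choose $u\in\Int(K_1^*)$ (nonempty since $K_1^*$ is a proper cone) normalized so that $\langle u,x_0\rangle=1$. The first guess $\widetilde X=x_0u^T$, $\widetilde Y=y_0u^T=A\widetilde X$ already has the right positivity, since $x_0u^T$ sends $v\in K_1\setminus\{0\}$ to $\langle u,v\rangle x_0\in\Int(K_1)$ and $y_0u^T$ sends it to $\langle u,v\rangle y_0\in\Int(K_2)$, and it satisfies $A=\widetilde Y\widetilde X^{-1}$ formally; but it fails because these matrices are rank one. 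The remedy I would use is to perturb: set $\widetilde X_\varepsilon=\varepsilon I+x_0u^T$ and $\widetilde Y_\varepsilon=A\widetilde X_\varepsilon=\varepsilon A+y_0u^T$. Since $x_0u^T\in\Int(\pi(K_1))$ and $y_0u^T\in\Int(\pi(K_1,K_2))$ are interior points of open sets, continuity in $\varepsilon$ gives an $\varepsilon_0>0$ for which $\widetilde X_\varepsilon$ is $K_1$-positive and $\widetilde Y_\varepsilon$ is $(K_1,K_2)$-positive whenever $0<\varepsilon<\varepsilon_0$; moreover the eigenvalues of $\widetilde X_\varepsilon$ are $1+\varepsilon$ and $\varepsilon$ (with multiplicity $n-1$), so $\widetilde X_\varepsilon$ is invertible for every $\varepsilon>0$. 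Any $\varepsilon\in(0,\varepsilon_0)$ then yields the required factorization $A=\widetilde Y_\varepsilon\widetilde X_\varepsilon^{-1}$.

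I expect the main obstacle to be exactly the tension in the ``only if'' direction between $K_1$-positivity of $\widetilde X$ and its invertibility: the most natural $K_1$-positive matrix attached to a semipositivity vector is rank one, so one must perturb, and the care needed is to keep the perturbation small enough that $\widetilde X_\varepsilon$ stays inside $\Int(\pi(K_1))$ and $A\widetilde X_\varepsilon$ stays inside $\Int(\pi(K_1,K_2))$ while still forcing $\det\widetilde X_\varepsilon\neq0$. A minor preliminary point is to be sure we may take the semipositivity vector in $\Int(K_1)$, which is precisely the equivalence noted in the definition.
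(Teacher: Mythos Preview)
The paper does not supply its own proof of this theorem: it is stated with attribution to \cite{ChaJM18} and used as background, so there is no in-paper argument to compare against. That said, your proposal is a correct and self-contained proof. The ``if'' direction is immediate as you wrote it. For the ``only if'' direction your perturbation idea is exactly the standard device: the rank-one map $x_0u^T$ with $x_0\in\Int(K_1)$ and $u\in\Int(K_1^*)$ lies in $\Int(\pi(K_1))$, and since $\pi(K_1)$ and $\pi(K_1,K_2)$ are proper cones whose interiors coincide with the positive maps, a small additive perturbation $\varepsilon I$ keeps $\widetilde X_\varepsilon$ and $\widetilde Y_\varepsilon=A\widetilde X_\varepsilon$ in the respective interiors while forcing invertibility (your eigenvalue computation $\{\,\varepsilon,\ \varepsilon+1\,\}$ is correct because $\langle u,x_0\rangle=1$). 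The only cosmetic point is that you do not actually need the normalization $\langle u,x_0\rangle=1$ for the argument to go through; any $u\in\Int(K_1^*)$ works, since invertibility of $\varepsilon I+x_0u^T$ for small $\varepsilon>0$ follows already from $\langle u,x_0\rangle>0$.
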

 The above theorem generalizes Theorem 3.1 of~\cite{Tsa15}, which states that any semipositive matrix $A$ can be factored as $YX^{-1}$, for some positive matrices $X$ and $Y$.

Ellipsoidal cones are another important class of convex cones which have simple structure with long mathematical history. These cones are involved in applications of many fields like,  physics, statistics, optimization, etc.  An ellipsoidal cone is a proper cone that has at least one ellipsoidal cross section (intersection with hyperplane). Various definitions of ellipsoidal cones are available in literature and, a review and connection of these different approaches can be found in~\cite{SeeT20}. An \emph{ellipsoid} in $\mathbb{R}^n$ is an image of a closed unit ball in $\mathbb{R}^n$ under an invertible affine transformation on  $\mathbb{R}^n$.  We now provide a mathematical definition of an ellipsoidal cone, which is considered by Stern and Wolkowicz  in~\cite{SteW91a}.
\begin{definition}{\rm \cite{JerM13,SteW91a,SeeT20}}
A proper cone $K$ in $\mathbb{R}^n$ is an \emph{ellipsoidal cone} if there exists a nonzero vector $y_0\in\mathbb{R}^n$ such that  the cross section $S(K,y_0):=\{x\in K:~x^Ty=1\}$ is an ellipsoid in the hyperplane $\{x\in\mathbb{R}^n:~ x^Ty_0=1\}$.
\end{definition}

Stern and Wolkowicz \cite{SteW91a} furnish a characterization of ellipsoidal cones,  which is given by the following results.

\begin{theorem} {\rm \cite{SteW91a}}  A proper cone $K$  is an ellipsoidal cone if and only if $K$ admits the representation
\[K=\{x\in \mathbb{R}^n:x^TQx\leq 0:u^Tx\geq 0\}\]
where $Q$ is a symmetric non-singular  matrix with inertia $(n-1,0,1)$ and $u$ is an eigenvector of $Q$ associated to the unique negative eigenvalue  $\lambda$.
\end{theorem}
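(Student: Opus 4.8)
Since the statement is an equivalence, the plan is to prove the two implications separately, in each case diagonalising $Q$ by an orthogonal change of variables and rescaling, so that a cone of the stated form becomes an invertible linear image of the Lorentz cone $\mathcal{L}^n_+$. For the ``if'' direction, suppose $K=\{x:x^TQx\le 0,\ u^Tx\ge 0\}$ with $Q$ symmetric, nonsingular, of inertia $(n-1,0,1)$ and $u$ an eigenvector for the negative eigenvalue $\lambda$. Writing $Q=P\diag(\mu_1,\dots,\mu_{n-1},\lambda)P^T$ with $P$ orthogonal, $\mu_i>0>\lambda$, and $Pe_n$ a unit eigenvector for $\lambda$, the coordinates $\xi=P^Tx$ turn the two constraints into $\sum_{i<n}\mu_i\xi_i^2\le|\lambda|\xi_n^2$ and $\xi_n\ge 0$ (after choosing the sign of $Pe_n$ to absorb that of $u$). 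Thus $K$ is exactly one nappe of an elliptic quadric cone — in particular a proper cone — and the cross section $\{x\in K:u^Tx=1\}$ becomes, in these coordinates, the slice $\xi_n=1/\|u\|$ of that nappe, namely $\{\sum_{i<n}\mu_i\xi_i^2\le |\lambda|/\|u\|^2\}$. Rescaling each $\xi_i$ by $\sqrt{\mu_i\|u\|^2/|\lambda|}$ and composing with $P$ exhibits this set as the image of a closed unit ball under an invertible affine map, so $K$ is ellipsoidal with $y_0=u$ as a witness.

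For the ``only if'' direction, let $K$ be a proper cone with a witness $y_0\neq 0$ for which $S:=\{x\in K:y_0^Tx=1\}$ is an ellipsoid in $H:=\{x:y_0^Tx=1\}$. The first step is to prove $y_0\in\Int(K^*)$. As the image of a closed unit ball under an affine map, $S$ is compact and nonempty and contains its centre $c\in K$. If some nonzero $x_0\in K$ had $y_0^Tx_0\le 0$, then the ray $\{c+tx_0:t\ge 0\}$ lies in the convex cone $K$ and meets the subspace $H_0:=\{x:y_0^Tx=0\}$ in a \emph{nonzero} point $p\in K$ (equal to $x_0$ if $y_0^Tx_0=0$, and otherwise reached at $t=1/|y_0^Tx_0|$, nonzero by pointedness of $K$); but then $\{c+sp:s\ge 0\}\subseteq S$ contradicts the compactness of $S$. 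Hence $y_0\in\Int(K^*)$, and it follows that every nonzero $x\in K$ meets $H$ at $x/(y_0^Tx)\in S$, so $K=\{ts:t\ge 0,\ s\in S\}$ is the cone over $S$.

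Next I would make the quadratic form explicit. Write the ellipsoid as $S=\{c+w:w\in H_0,\ M(w,w)\le 1\}$ for a positive-definite quadratic form $M$ on the $(n-1)$-dimensional space $H_0$, and use the direct sum $\mathbb{R}^n=\mathbb{R}c\oplus H_0$ (valid since $y_0^Tc=1$). Passing to the cone over $S$ and substituting $w\mapsto tw$ gives $K=\{tc+w:t\ge 0,\ w\in H_0,\ M(w,w)\le t^2\}$. Writing a general $x$ uniquely as $x=(y_0^Tx)c+w$ with $w=x-(y_0^Tx)c\in H_0$ and defining $x^TQx:=M(w,w)-(y_0^Tx)^2$ yields a symmetric matrix $Q$ with $K=\{x:x^TQx\le 0,\ y_0^Tx\ge 0\}$; since the form $M(w,w)-t^2$ is positive definite on $H_0$ and negative definite on $\mathbb{R}c$, Sylvester's law of inertia shows $Q$ is nonsingular with inertia $(n-1,0,1)$.

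The remaining and most delicate point is that this $y_0$ need not be an eigenvector of $Q$ for $\lambda$ — the centre $c$ is generally not a multiple of $y_0$ — so it must be replaced by one. Diagonalising $Q$ again, $\{x:x^TQx\le 0\}$ is a union of two nappes $N_+\cup N_-$ meeting only at $0$; since $\Int(K)$ is nonempty, connected and contained in $\Int(N_+)\sqcup\Int(N_-)$, it lies in one of them, so $K=\overline{\Int(K)}\subseteq N_+$, and using $y_0\in\Int(K^*)$ one checks (by a connectedness and density argument inside $\Int(N_+)$) that $N_+\subseteq\{y_0^Tx\ge 0\}$, which forces $K=N_+$. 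Finally, taking $u$ to be the eigenvector of $Q$ for $\lambda$ with the sign chosen so that it is a positive multiple of $e_n$ in the eigencoordinates, the condition $u^Tx\ge 0$ selects precisely $N_+$, so $K=\{x:x^TQx\le 0,\ u^Tx\ge 0\}$ with $u$ of the required type. The one subtlety — that this signed eigenvector indeed lies in $\Int(K^*)$ — becomes transparent in the eigenbasis, where both $K$ and $K^*$ are ``round'' cones about the $\xi_n$-axis and so both contain $e_n$ in their interior; verifying the nappe identification and this last point is where I expect the argument to need the most care.
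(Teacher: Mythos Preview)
The paper does not prove this theorem at all: it is quoted verbatim from Stern and Wolkowicz~\cite{SteW91a} as a known characterisation of ellipsoidal cones, with no accompanying argument. So there is no ``paper's own proof'' to compare against; your proposal is an independent proof of a cited result.

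That said, your argument is essentially sound and follows the natural route. The ``if'' direction is straightforward once $Q$ is orthogonally diagonalised. In the ``only if'' direction, the step showing $y_0\in\Int(K^*)$ is correct (your use of pointedness to exclude $p=0$ is exactly what is needed), and the construction of $Q$ from the ellipsoid data via the splitting $\mathbb{R}^n=\mathbb{R}c\oplus H_0$ is clean; Sylvester's law gives the inertia since $Q$ is positive definite on the $(n-1)$-dimensional $H_0$ and negative definite on $\mathbb{R}c$. The nappe argument at the end can in fact be made completely elementary, without the connectedness/density step you flag as delicate: once you know $K=\{x:x^TQx\le 0,\ y_0^Tx\ge 0\}\subseteq N_+$, intersect with $\{y_0^Tx\ge 0\}$ to see $N_-\cap\{y_0^Tx\ge 0\}\subseteq N_+\cap N_-=\{0\}$, and then use $N_-=-N_+$ to conclude $N_+\setminus\{0\}\subseteq\{y_0^Tx>0\}$, hence $N_+\subseteq K$. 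With $K=N_+$ in hand, choosing the sign of the eigenvector $u$ so that $u^Tx\ge 0$ cuts out $N_+$ is immediate in the eigenbasis. So the point you single out as ``most delicate'' is actually routine once reorganised this way.
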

 Taking the non-singular symmetric matrix $Q=\diag(1,\ldots,1,-1)$ and considering the only negative eigenvalue $\lambda=-1$ with the corresponding eigenvector $u=[0,0,\ldots, 1]^T$ in the above theorem, the associated ellipsoidal cone takes the form 
\[K=\left\{x\in \mathbb{R}^n:x_n\geq 0,~\sum\limits_{i=1}^{n-1}x_{i}^2\leq x_n^2\right\}\]
This  particular ellipsoidal cone is known as \emph{Lorentz cone} and is denoted by $\mathcal{L}^n_+$.  In fact any ellipsoidal cone can be characterized by the self dual Lorentz cone, which are given below:

\begin{theorem}{\rm\cite{JerM13}} For a proper cone $K$, following statements are equivalent:
\begin{itemize}
\item[\rm(a)] $K$ is an ellipsoidal cone.
\item[\rm(b)] $K$ is isomorphic to $\mathcal{L}^n_+$.
\item[\rm(c)] The dual cone $K^{*}$ is an ellipsoidal cone.
\end{itemize}
\end{theorem}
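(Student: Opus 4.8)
The plan is to prove the cycle $(a)\Rightarrow(b)\Rightarrow(a)$ and then to close the loop through duality by establishing $(b)\Leftrightarrow(c)$. Here ``isomorphic'' is taken in the sense of linear isomorphism of cones, i.e. $K$ is isomorphic to $\mathcal{L}^n_+$ exactly when $K=M(\mathcal{L}^n_+)$ for some invertible $M\in\mathbb{R}^{n\times n}$. The tools are the Stern--Wolkowicz representation quoted above, the spectral theorem for real symmetric matrices (equivalently, Sylvester's law of inertia in a controlled form), the identity $(M(C))^{*}=M^{-T}(C^{*})$ for invertible $M$, the self-duality $(\mathcal{L}^n_+)^{*}=\mathcal{L}^n_+$, and the bidual relation $K^{**}=K$ for proper cones.

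For $(a)\Rightarrow(b)$ I would start from the representation $K=\{x:x^{T}Qx\le 0,\ u^{T}x\ge 0\}$, with $Q$ symmetric, nonsingular, of inertia $(n-1,0,1)$, and $u$ an eigenvector of $Q$ for its unique negative eigenvalue $\lambda$. Take a spectral decomposition $V^{T}QV=\diag(\mu_{1},\dots,\mu_{n-1},\lambda)$ with $V$ orthogonal and each $\mu_{i}>0$, choosing the last column of $V$ to be $u/\norm{u}$ (possible since $\lambda$ is simple and $u$ spans its eigenspace); this fixes a sign so that $V^{T}u=\norm{u}e_{n}$. Rescaling by the positive diagonal matrix $S=\diag(\mu_{1}^{-1/2},\dots,\mu_{n-1}^{-1/2},(-\lambda)^{-1/2})$ and setting $T=VS$ gives $T^{T}QT=\diag(1,\dots,1,-1)$ and $T^{T}u=\norm{u}(-\lambda)^{-1/2}e_{n}$, a positive multiple of $e_{n}$. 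Substituting $x=Ty$ then turns $x^{T}Qx\le 0$ into $\sum_{i=1}^{n-1}y_{i}^{2}\le y_{n}^{2}$ and $u^{T}x\ge 0$ into $y_{n}\ge 0$, so $T^{-1}(K)=\mathcal{L}^n_+$ and $K$ is isomorphic to $\mathcal{L}^n_+$. The point that needs care is precisely the eigenvector hypothesis on $u$: it is what forces the functional $u^{T}x$ to become a multiple of $y_{n}$ (rather than an arbitrary linear functional) after the congruence normalizing $Q$, and the sign bookkeeping on the last column of $V$ is needed so that the half-space picks out the correct nappe.

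For $(b)\Rightarrow(a)$ I would verify the defining property directly. Writing $K=M(\mathcal{L}^n_+)$, put $y_{0}=M^{-T}e_{n}\neq 0$; for $z=Mx$ with $x\in\mathcal{L}^n_+$ we get $z^{T}y_{0}=x^{T}e_{n}=x_{n}$, so the cross section $S(K,y_{0})$ equals $M$ applied to $\{x\in\mathcal{L}^n_+:x_{n}=1\}$. The latter set is the $(n-1)$-dimensional closed unit ball $\{x:x_{n}=1,\ \sum_{i=1}^{n-1}x_{i}^{2}\le 1\}$, an ellipsoid in the hyperplane $\{x_{n}=1\}$; since $M$ restricts to an affine isomorphism of that hyperplane onto $\{z:y_{0}^{T}z=1\}$ and affine images of ellipsoids are ellipsoids, $S(K,y_{0})$ is an ellipsoid in its hyperplane, so $K$ is an ellipsoidal cone.

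Finally, for $(b)\Leftrightarrow(c)$: if $(b)$ holds then $K=M(\mathcal{L}^n_+)$ gives $K^{*}=M^{-T}(\mathcal{L}^n_+{}^{*})=M^{-T}(\mathcal{L}^n_+)$, so $K^{*}$ is isomorphic to $\mathcal{L}^n_+$ and hence, by the already proven $(b)\Rightarrow(a)$ applied to $K^{*}$, it is an ellipsoidal cone, which is $(c)$. Conversely, if $(c)$ holds then $(a)\Rightarrow(b)$ applied to $K^{*}$ yields $K^{*}=N(\mathcal{L}^n_+)$ for some invertible $N$, whence $K=K^{**}=(N(\mathcal{L}^n_+))^{*}=N^{-T}(\mathcal{L}^n_+{}^{*})=N^{-T}(\mathcal{L}^n_+)$ is isomorphic to $\mathcal{L}^n_+$, i.e. $(b)$ holds. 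I expect the genuinely substantive step to be the simultaneous normalization in $(a)\Rightarrow(b)$ — diagonalizing $Q$ to $\diag(1,\dots,1,-1)$ while keeping $u$ aligned with $e_{n}$ with the right sign; once self-duality of $\mathcal{L}^n_+$ is in hand, the duality equivalences are essentially formal.
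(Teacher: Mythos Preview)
The paper does not prove this theorem; it is quoted from \cite{JerM13} as background material and stated without proof, so there is no argument in the paper to compare against. Your proof proposal is correct and self-contained: the reduction $(a)\Rightarrow(b)$ via the spectral theorem applied to the Stern--Wolkowicz data $(Q,u)$ is exactly the right mechanism (and your remark that the eigenvector hypothesis on $u$ is what aligns the half-space constraint with $e_n$ after congruence is the essential point), the direct verification of $(b)\Rightarrow(a)$ by exhibiting the ellipsoidal cross section at $y_0=M^{-T}e_n$ is clean, and the duality equivalences follow formally from $(M(C))^{*}=M^{-T}(C^{*})$, self-duality of $\mathcal{L}^n_+$, and $K^{**}=K$.
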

\begin{theorem}{\rm \cite{SteW91}}\label{ellip}
A proper cone $K$ is ellipsoidal if and only if $K=X(\mathcal{L}^n_+)$ for some invertible matrix $X$.
\end{theorem}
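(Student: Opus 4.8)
The plan is to prove the two implications separately, handling the easier sufficiency first and then treating necessity through the quadratic-form description of ellipsoidal cones recalled above. \emph{Sufficiency:} assume $K=X(\mathcal{L}^n_+)$ for some invertible $X\in\mathbb{R}^{n\times n}$. First I would record that $K$ is a proper cone: $X$ is a linear homeomorphism of $\mathbb{R}^n$, so it carries the closed, convex, pointed, solid cone $\mathcal{L}^n_+$ to a cone with the same four properties --- pointedness because $X(\mathcal{L}^n_+)\cap X(-\mathcal{L}^n_+)=X\big(\mathcal{L}^n_+\cap(-\mathcal{L}^n_+)\big)=X(\{0\})=\{0\}$ by injectivity, and solidity because $X$ is an open map. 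To exhibit an ellipsoidal cross section, put $y_0:=X^{-T}e_n\neq 0$, where $e_n=[0,\dots,0,1]^T$, and note that $B:=\{x\in\mathcal{L}^n_+:e_n^Tx=1\}=\{x:x_n=1,\ \sum_{i=1}^{n-1}x_i^2\le 1\}$ is a closed ball, hence an ellipsoid, in the hyperplane $\{x:e_n^Tx=1\}$. A direct check gives $\{y\in K:y_0^Ty=1\}=X(B)$, and since $X$ restricts to an affine isomorphism of $\{x:e_n^Tx=1\}$ onto $\{y:y_0^Ty=1\}$, its image $X(B)$ is an ellipsoid in the latter hyperplane; thus $K$ is ellipsoidal.

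\emph{Necessity:} assume $K$ is ellipsoidal. By the Stern--Wolkowicz characterization there are a symmetric nonsingular matrix $Q$ of inertia $(n-1,0,1)$ and an eigenvector $u$ of $Q$ for its unique negative eigenvalue $\lambda<0$ with $K=\{x:x^TQx\le 0,\ u^Tx\ge 0\}$. Using the spectral theorem, write $Q=P\Lambda P^T$ with $P$ orthogonal and $\Lambda=\diag(\mu_1,\dots,\mu_{n-1},\lambda)$, $\mu_i>0$, ordered so that $\lambda$ occupies the last slot, and choose the sign of the last column $p_n$ of $P$ so that $p_n=u/\norm{u}$. Put $D=\diag(\sqrt{\mu_1},\dots,\sqrt{\mu_{n-1}},\sqrt{-\lambda})$ and $X:=PD^{-1}$, which is invertible. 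Then, writing $Q_0=\diag(1,\dots,1,-1)$ so that $\mathcal{L}^n_+=\{z:z^TQ_0z\le 0,\ e_n^Tz\ge 0\}$, one computes $X^{-T}Q_0X^{-1}=P(DQ_0D)P^T=P\Lambda P^T=Q$ and $X^{-T}e_n=PDe_n=\sqrt{-\lambda}\,p_n=\tfrac{\sqrt{-\lambda}}{\norm{u}}\,u$, a \emph{positive} multiple of $u$. Hence for every $y\in\mathbb{R}^n$,
\[
y\in X(\mathcal{L}^n_+)\iff X^{-1}y\in\mathcal{L}^n_+\iff y^TQy\le 0\ \text{and}\ u^Ty\ge 0\iff y\in K,
\]
so $K=X(\mathcal{L}^n_+)$, as required.

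\emph{Where the difficulty lies:} the routine items are the four closure properties of proper cones and the identity $\{y\in K:y_0^Ty=1\}=X(B)$ in the sufficiency part. The real work is the bookkeeping in the necessity part --- arranging the congruence $X^{-T}Q_0X^{-1}$ to equal $Q$ exactly (forced by scaling the eigenvectors of $Q$ with the square roots of the moduli of the eigenvalues), and, crucially, making the functional $e_n^TX^{-1}$ agree with $u^T$ up to a \emph{positive} scalar rather than a negative one. This last point is exactly where the hypothesis that $u$ is the eigenvector belonging to the negative eigenvalue, together with the freedom to flip the sign of $p_n$, is used, and I expect this sign/scaling step to be the main obstacle. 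Incidentally, the necessity direction can also be obtained at once from the equivalence ``$K$ is ellipsoidal $\iff$ $K$ is isomorphic to $\mathcal{L}^n_+$'' recalled above, since a cone isomorphism from $\mathbb{R}^n$ onto itself is precisely an invertible matrix.
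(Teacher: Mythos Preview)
The paper does not supply a proof of this theorem: it is quoted verbatim from \cite{SteW91} as a known result and used as a black box later on. Consequently there is no ``paper's own proof'' to compare against.

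That said, your argument is correct and self-contained. The sufficiency direction is clean, and your necessity argument is exactly the standard diagonalize-and-rescale reduction of a quadratic form of inertia $(n-1,0,1)$ to $Q_0=\diag(1,\dots,1,-1)$, with the sign of $p_n$ fixed to align the linear constraint. The only remark worth making is that your necessity proof \emph{uses} the preceding Stern--Wolkowicz characterization (the $Q$-representation), so the two cited theorems in the paper are not logically independent in your write-up; this is fine, but your final sentence already flags the alternative of invoking the isomorphism characterization directly, which would make the argument shorter still.
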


In literature ``Lorentz cones are also known as ice cream cones or second-order cones''.  Nevertheless Lorentz cone is the only self-dual revolution cone upto orthogonal transformation. This type of cones appears in second order cone programming (SOCP) problems.  Applications of SOCP problems may be found in various engineering problems, like truss design, filter design, antenna array weight design, and grasping forte optimization in robotics etc.~\cite{LobVBL98}.  In \cite{ChaJM18}, Chandrashekaran et al. characterized $\mathcal{L}^n_+$-semipositive matrices in terms of semipositive matrices. In particular, they proved that 
each $\mathcal{L}^n_+$-semipositive matrix is similar to a semipositive matrix via an $\left(\mathbb{R}^n_+,\mathcal{L}^n_+\right)$-negative matrix. The formal statements of the results are given below.

\begin{theorem}{\em\cite{ChaJM18}}\label{thm3-int} If $A\in\mathbb{R}^{2\times 2}$ is a semipositive matrix, then $TAT^{-1}$ is $\mathcal{L}^2_+$-semipositive, where $T=\left[\begin{array}{lr} 1 &-1\\1 &1\end{array} \right]$. Conversely, if $C$ is $\mathcal{L}^2_+$-semipositive, then there exists a semipositive matrix $A$ such that $C=TAT^{-1}$. 
\end{theorem}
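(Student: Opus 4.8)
The plan is to reduce the whole statement to a single geometric observation about how $T$ acts on the two cones involved, namely that $T$ carries the nonnegative orthant onto the two-dimensional Lorentz cone. So the first step is to record the elementary fact that $T(\mathbb{R}^2_+)=\mathcal{L}^2_+$. Since $Te_1=(1,1)^T$ and $Te_2=(-1,1)^T$ are precisely the two extremals of $\mathcal{L}^2_+=\{(x_1,x_2)^T:\ x_2\geq|x_1|\}$, and $T$ is linear, the image $T(\mathbb{R}^2_+)=\{\alpha(1,1)^T+\beta(-1,1)^T:\ \alpha,\beta\geq 0\}$ is contained in $\mathcal{L}^2_+$, and the reverse inclusion follows by writing an arbitrary $(x_1,x_2)^T\in\mathcal{L}^2_+$ with $\alpha=(x_1+x_2)/2\geq 0$ and $\beta=(x_2-x_1)/2\geq 0$. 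Because $T$ is invertible it is a linear homeomorphism of $\mathbb{R}^2$, hence it also maps $\Int(\mathbb{R}^2_+)$ onto $\Int(\mathcal{L}^2_+)$, and symmetrically $T^{-1}$ maps $\mathcal{L}^2_+$ onto $\mathbb{R}^2_+$ and $\Int(\mathcal{L}^2_+)$ onto $\Int(\mathbb{R}^2_+)$.

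With this in hand the forward implication is immediate. If $A$ is semipositive, pick a semipositivity vector $x\in\Int(\mathbb{R}^2_+)$ with $Ax\in\Int(\mathbb{R}^2_+)$ (recall that a semipositivity vector may always be chosen in the interior). Set $y=Tx$; then $y\in\Int(\mathcal{L}^2_+)\subseteq\mathcal{L}^2_+$ by the first step, while $(TAT^{-1})y=TAx\in T(\Int(\mathbb{R}^2_+))=\Int(\mathcal{L}^2_+)$. Thus $y$ witnesses that $TAT^{-1}$ is $\mathcal{L}^2_+$-semipositive.

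For the converse, given an $\mathcal{L}^2_+$-semipositive matrix $C$, put $A=T^{-1}CT$, so that $C=TAT^{-1}$. Choose a semipositivity vector $y\in\Int(\mathcal{L}^2_+)$ with $Cy\in\Int(\mathcal{L}^2_+)$, and set $x=T^{-1}y\in\Int(\mathbb{R}^2_+)$. Then $Ax=T^{-1}CTT^{-1}y=T^{-1}(Cy)\in T^{-1}(\Int(\mathcal{L}^2_+))=\Int(\mathbb{R}^2_+)$, so $x>0$ and $Ax>0$, i.e.\ $A$ is semipositive.

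The argument is short and there is no genuine obstacle once the opening identity $T(\mathbb{R}^2_+)=\mathcal{L}^2_+$ is in place; the only point that deserves a word of care is the passage to topological interiors, which is handled simply by noting that an invertible linear map is a homeomorphism. One could alternatively deduce the result from Theorem~\ref{thm2-int} by observing that $T$ is $(\mathbb{R}^2_+,\mathcal{L}^2_+)$-positive and $T^{-1}$ is $(\mathcal{L}^2_+,\mathbb{R}^2_+)$-positive, but the direct computation above is cleaner and exhibits the correspondence of semipositivity vectors ($x\leftrightarrow Tx$) explicitly.
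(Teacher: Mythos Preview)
Your argument is correct. The crucial observation that $T(\mathbb{R}^2_+)=\mathcal{L}^2_+$ is verified cleanly, the passage to interiors via the homeomorphism property of an invertible linear map is exactly the right justification, and both implications then follow by transporting semipositivity vectors along $T$ and $T^{-1}$.

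Note, however, that the paper does not prove this theorem at all: it is quoted from \cite{ChaJM18} in the introduction as background, with no proof supplied. So there is no proof in the present paper to compare against. What you have written is essentially the natural proof (and is in the spirit of the argument in \cite{ChaJM18}), exploiting the fact that in dimension two the Lorentz cone is simplicial and hence linearly isomorphic to $\mathbb{R}^2_+$ via $T$. Your closing remark that one could alternatively invoke Theorem~\ref{thm2-int} is also apt, but as you say the direct route is more transparent here.
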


\begin{theorem}{\em\cite{ChaJM18}}\label{thm4-int} Let $n\geq 3$. If $A$ is a semipositive matrix, then $SAS^{-1}$ is $\mathcal{L}^n_+$-semipositive, for some invertible matrix $S\in\pi\left(\mathbb{R}^n_+,\mathcal{L}^n_+\right)$, Conversely, if $C$ is $\mathcal{L}^n_+$-semipositive, then there exists a semipositive matrix $A$ such that $C=\left(S^T\right)^{-1}AS^T$. 
\end{theorem}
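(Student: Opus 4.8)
The plan is to run both implications off a single device: a fixed invertible matrix $S$ that sends $\mathbb{R}^n_+\setminus\{0\}$ into $\Int(\mathcal{L}^n_+)$, that is, an invertible $\left(\mathbb{R}^n_+,\mathcal{L}^n_+\right)$-positive matrix. Such an $S$ lies in $\pi\!\left(\mathbb{R}^n_+,\mathcal{L}^n_+\right)$ automatically, and — this is the one observation that really does the work — its transpose $S^T$ turns out to be $\left(\mathcal{L}^n_+,\mathbb{R}^n_+\right)$-positive, because both cones are self-dual. Granting $S$, the forward implication follows by pushing a semipositivity vector of $A$ forward through $S$, and the converse by pulling a semipositivity vector of $C$ back through $S^T$; the conjugations $A\mapsto SAS^{-1}$ and $C\mapsto S^TC(S^T)^{-1}$ are precisely what make the resulting computations telescope.

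First I would build $S$. Since $\mathcal{L}^n_+$ is solid, $\Int(\mathcal{L}^n_+)$ is a nonempty open set and hence contains $n$ linearly independent vectors $v_1,\dots,v_n$; take $S=[\,v_1\ \cdots\ v_n\,]$, which is invertible. For $c=(c_1,\dots,c_n)\in\mathbb{R}^n_+\setminus\{0\}$ pick $j$ with $c_j>0$; then $Sc=c_jv_j+\sum_{i\neq j}c_iv_i$ is the sum of the interior point $c_jv_j\in\Int(\mathcal{L}^n_+)$ and the element $\sum_{i\neq j}c_iv_i\in\mathcal{L}^n_+$, so $Sc\in\Int(\mathcal{L}^n_+)$. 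Thus $S$ is $\left(\mathbb{R}^n_+,\mathcal{L}^n_+\right)$-positive, and in particular $S\in\pi\!\left(\mathbb{R}^n_+,\mathcal{L}^n_+\right)$. For the transpose, fix $y\in\mathcal{L}^n_+\setminus\{0\}$; then $\langle S^Ty,e_i\rangle=\langle y,Se_i\rangle=\langle y,v_i\rangle$, and since $v_i$ lies in $\Int(\mathcal{L}^n_+)=\Int\big((\mathcal{L}^n_+)^{*}\big)$ it pairs strictly positively with the nonzero element $y$ of $\mathcal{L}^n_+$; hence $(S^Ty)_i>0$ for all $i$, i.e. $S^T$ is $\left(\mathcal{L}^n_+,\mathbb{R}^n_+\right)$-positive.

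Now the forward implication: let $A$ be semipositive. Using the equivalent form of semipositivity recorded in the definition, fix $x\in\Int(\mathbb{R}^n_+)$ with $Ax\in\Int(\mathbb{R}^n_+)$, and put $y:=Sx$. Then $y\in\Int(\mathcal{L}^n_+)\subseteq\mathcal{L}^n_+$ and $\big(SAS^{-1}\big)y=SAx\in S\big(\Int(\mathbb{R}^n_+)\big)\subseteq\Int(\mathcal{L}^n_+)$, so $SAS^{-1}$ is $\mathcal{L}^n_+$-semipositive with semipositivity vector $y$, and $S$ is the required invertible element of $\pi\!\left(\mathbb{R}^n_+,\mathcal{L}^n_+\right)$. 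For the converse, let $C$ be $\mathcal{L}^n_+$-semipositive and fix $z\in\Int(\mathcal{L}^n_+)$ with $Cz\in\Int(\mathcal{L}^n_+)$. Set $A:=S^TC\big(S^T\big)^{-1}$ and $w:=S^Tz$. By the transpose computation, $w\in\Int(\mathbb{R}^n_+)$ and $Aw=S^TCz\in S^T\big(\Int(\mathcal{L}^n_+)\big)\subseteq\Int(\mathbb{R}^n_+)$; hence $A$ is semipositive and $C=\big(S^T\big)^{-1}AS^T$, as claimed.

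The argument is essentially this short, so there is no single hard technical step; the part that needs care — and the conceptual crux — is the second paragraph: one must recognize that the right object is a \emph{strictly positive embedding} $S$ of $\mathbb{R}^n_+$ into $\mathcal{L}^n_+$ (not a linear isomorphism, which cannot exist once $\mathcal{L}^n_+$ has infinitely many extremals) and then extract $\left(\mathcal{L}^n_+,\mathbb{R}^n_+\right)$-positivity of $S^T$ from self-duality, which is exactly what couples the two halves through $S^T$ as in the statement. I would also note that this reasoning goes through verbatim for every $n\ge 2$; the hypothesis $n\ge 3$ only separates the statement from the sharper $n=2$ case of Theorem~\ref{thm3-int}, where $\mathcal{L}^2_+$ is simplicial and a single explicit $S$ can be taken mapping $\mathbb{R}^2_+$ \emph{onto} $\mathcal{L}^2_+$. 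Alternatively, one could route the forward direction through the factorization Theorem~\ref{thm2-int} by writing $A=YX^{-1}$ with $X,Y$ entrywise positive and tracking how $S$ acts on the factors, but the direct push-forward/pull-back argument above is cleaner.
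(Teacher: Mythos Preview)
The paper does not supply a proof of this theorem: it is quoted verbatim from \cite{ChaJM18} as background, so there is no in-paper argument to compare against. Your proof is correct and self-contained. The key ingredients---choosing $S$ with columns in $\Int(\mathcal{L}^n_+)$ so that $S$ is $(\mathbb{R}^n_+,\mathcal{L}^n_+)$-positive, and then using self-duality of $\mathcal{L}^n_+$ to deduce that $S^T$ is $(\mathcal{L}^n_+,\mathbb{R}^n_+)$-positive---are exactly the mechanism that drives the result in \cite{ChaJM18} as well, so your reconstruction matches the intended approach. Your remark that the argument works for all $n\ge 2$ and that the restriction $n\ge 3$ merely separates the statement from the explicit $2\times 2$ case is also accurate.
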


As similarity transforms preserve many algebraic and spectral properties, in order to study those properties for  $\mathcal{L}^n_+$-semipositive matrices, it suffices to study the same for semipositive matrices. In this article, we provide some results pertinent to $K$-semipositive and $\mathcal{L}^n_+$-semipositive matrices, which may not be preserved under similarity transformation.  The aim of the paper is to study  $K$-semipositive matrices for various types of proper cone $K$. An emphasis is given to analyze the properties of  $\mathcal{L}^n_+$-semipositive matrices, and as  particular cases, we study orthogonal and triangular $\mathcal{L}^n_+$-semipositive matrices.

The outline of the paper is as follows: Algebraic properties of $\mathcal{L}^n_+$-semipositive matrices are discussed in Section~\ref{sec2}. In particular, we provide a few necessary and some other sufficient conditions for $\mathcal{L}^n_+$-semipositive matrices. In Section~\ref{sec3}, an emphasize has been given to special types of $\mathcal{L}^n_+$-semipositive matrices.  More precisely, characterization of diagonal, orthogonal and lower triangular $\mathcal{L}^n_+$-semipositive matrices are studied. Later in Section~\ref{sec4}, given a square matrix $A$ and a  proper cone $K$, geometrical properties of the $K$-semipositive cone  $\mathcal{K}_{A,K}=\{x\in K:Ax\in K\}$ and  of the cone $\mathcal{S}_{A,K}=\{x:Ax\in K\}$, are discussed. Especially, an emphasize has been given to the cone $\mathcal{S}_{A,\mathcal{L}^n_+}$. Lastly, we end the paper with conclusive remarks in Section~\ref{sec5}.

\section{Algebraic properties of $\mathcal{L}^n_+$-semipositive matrices}\label{sec2}
As mentioned in the previous section, semipositivity of matrices catches interest of researchers mainly due to its relation with solvability of LCPs. {Given a proper cone $K$, Theorem~\ref{thm2-int} provides a characterization of a $K$-semipositive matrix $A$ by decomposing  $A=\widetilde{Y}\widetilde{X}^{-1}$, for some $K_1$-positive matrix $\widetilde{X}$ and $(K_1,K_2)$-positive $\widetilde{Y}$~\cite{ChaJM18}. This decomposition was exhibited for $\mathcal{L}^n_{+}$-semipositive matrix in  ~\cite{ChaJM18}, which is stated below:
\begin{theorem}{\em\cite{ChaJM18}}\label{th1-sec1}
$A\in\mathbb{R}^{n\times n}$ if $\mathcal{L}^n_+$-semipositive if and only if there exists $X,Y\in\mathbb{R}^{n\times n}$ with $X$ invertible, $X,Y$ are $\mathcal{L}^n_{+}$-semipositive such that $A=YX^{-1}$.
\end{theorem}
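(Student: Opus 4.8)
The plan is to deduce the statement from the general factorisation in Theorem~\ref{thm2-int}, specialised to the self-dual proper cone $\mathcal{L}^n_+$ (i.e.\ taking $K_1=K_2=\mathcal{L}^n_+$). For the ``only if'' direction this is almost immediate: if $A$ is $\mathcal{L}^n_+$-semipositive, then Theorem~\ref{thm2-int} supplies an invertible $\mathcal{L}^n_+$-positive matrix $\widetilde{X}$ and an $\mathcal{L}^n_+$-positive matrix $\widetilde{Y}$ with $A=\widetilde{Y}\widetilde{X}^{-1}$. Since an $\mathcal{L}^n_+$-positive matrix $M$ sends $\mathcal{L}^n_+\setminus\{0\}$ into $\Int(\mathcal{L}^n_+)$, there is certainly an $x\in\mathcal{L}^n_+$ with $Mx\in\Int(\mathcal{L}^n_+)$, so $M$ is $\mathcal{L}^n_+$-semipositive; hence $X:=\widetilde{X}$ and $Y:=\widetilde{Y}$ are the required factors.

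For the ``if'' direction, suppose $A=YX^{-1}$ with $X$ invertible and $X,Y$ both $\mathcal{L}^n_+$-semipositive. I would exhibit an explicit semipositivity vector for $A$. Using the equivalent description of semipositivity noted earlier (a point of $\Int(\mathcal{L}^n_+)$ is mapped into $\Int(\mathcal{L}^n_+)$), choose $v\in\Int(\mathcal{L}^n_+)$ with $Yv\in\Int(\mathcal{L}^n_+)$ and set $w:=Xv$. Then
\[
Aw = YX^{-1}Xv = Yv \in \Int(\mathcal{L}^n_+),
\]
so it only remains to verify that $w=Xv\in\mathcal{L}^n_+$; once this is in hand, $w$ is a semipositivity vector of $A$ and we are done.

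The membership $Xv\in\mathcal{L}^n_+$ is exactly the point that must be handled with care, and I expect it to be the main obstacle. It holds automatically once $X$ is taken to be $\mathcal{L}^n_+$-\emph{positive} --- which is precisely the form in which Theorem~\ref{thm2-int} delivers the left factor, since then $Xv\in\Int(\mathcal{L}^n_+)$ for every $v\in\mathcal{L}^n_+\setminus\{0\}$. Thus the cleanest route is to carry the positivity of both factors through the whole proof, remembering that an $\mathcal{L}^n_+$-positive matrix is a fortiori $\mathcal{L}^n_+$-semipositive. If instead one works only with the bare hypothesis that $X$ is $\mathcal{L}^n_+$-semipositive, then $v$ must be chosen in the nonempty open cone $\{v:\,Yv\in\Int(\mathcal{L}^n_+)\}$ so that simultaneously $Xv\in\mathcal{L}^n_+$, i.e.\ one has to show that the solid cone $X^{-1}(\mathcal{L}^n_+)$ meets that open cone; I would reduce this to the positive case via Theorem~\ref{thm2-int} (or, equivalently, via Theorems~\ref{thm3-int}--\ref{thm4-int} together with the factorisation of a semipositive matrix into entrywise-positive factors). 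A self-contained alternative for the ``if'' direction is the theorem of alternative (Theorem~\ref{one_is_true}) with $K_1=K_2=\mathcal{L}^n_+$: a vector $p\in-\mathcal{L}^n_+\setminus\{0\}$ with $A^Tp\in\mathcal{L}^n_+$ would force $Y^Tp\in X^T(\mathcal{L}^n_+)$, and one would try to contradict the semipositivity of $Y$ --- where again the delicate step is controlling $X^T(\mathcal{L}^n_+)$.
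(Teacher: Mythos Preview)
The paper does not supply a proof of this theorem; it is quoted from \cite{ChaJM18} without argument, so there is no ``paper's own proof'' to compare against. I will therefore evaluate your attempt on its own merits.

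Your ``only if'' direction is correct and is exactly the intended route: Theorem~\ref{thm2-int} with $K_1=K_2=\mathcal{L}^n_+$ produces an invertible $\mathcal{L}^n_+$-\emph{positive} $\widetilde X$ and an $\mathcal{L}^n_+$-\emph{positive} $\widetilde Y$ with $A=\widetilde Y\widetilde X^{-1}$, and positive trivially implies semipositive.

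For the ``if'' direction you correctly isolate the obstacle, namely that from $X$ merely $\mathcal{L}^n_+$-semipositive one cannot conclude $Xv\in\mathcal{L}^n_+$ for the chosen $v$. Your instinct here is right, and in fact the gap cannot be closed: the ``if'' direction is \emph{false} as printed. Via Theorem~\ref{thm3-int} it is equivalent to the assertion that $B_2B_1^{-1}$ is semipositive whenever $B_1,B_2$ are (ordinary) semipositive with $B_1$ invertible, and this fails already for
\[
B_1=\begin{pmatrix}1&-3\\[1mm]1&1\end{pmatrix},\qquad
B_2=\begin{pmatrix}-1&2\\[1mm]1&0\end{pmatrix},\qquad
B_2B_1^{-1}=\tfrac{1}{4}\begin{pmatrix}-3&-1\\[1mm]1&3\end{pmatrix}.
\]
Here $B_1e_1=(1,1)^T>0$ and $B_2(1,1)^T=(1,1)^T>0$, so both factors are semipositive, yet the first row of $B_2B_1^{-1}$ is strictly negative, so $B_2B_1^{-1}$ is not semipositive. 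Conjugating by $T$ gives $\mathcal{L}^2_+$-semipositive $X,Y$ with $YX^{-1}$ not $\mathcal{L}^2_+$-semipositive.

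The resolution is that the word ``semipositive'' in the statement is almost certainly a transcription slip for ``positive'' (compare the general Theorem~\ref{thm2-int}, also taken from \cite{ChaJM18}). With ``$\mathcal{L}^n_+$-positive'' in place of ``$\mathcal{L}^n_+$-semipositive'', both directions are immediate: the ``only if'' is exactly Theorem~\ref{thm2-int}, and for the ``if'' your own argument goes through because an $\mathcal{L}^n_+$-positive $X$ sends every nonzero $v\in\mathcal{L}^n_+$ into $\Int(\mathcal{L}^n_+)$, so $w=Xv\in\mathcal{L}^n_+$ automatically. None of the workarounds you sketch (intersecting open cones, the theorem of the alternative) can rescue the literal ``semipositive'' version.
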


 Motivated by this work, we revisit the class of $\mathcal{L}^n_+$-semipositive matrices and presents some algebraic properties of such matrices.
}

Recall that $\mathcal{L}^{n}_{+}=\left\{x\in \mathbb{R}^n:x_n\geq 0,~\sum\limits_{i=1}^{n-1}x_{i}^2\leq x_n^2\right\}=\left\{x:~\norm{x}\leq \sqrt{2}x_n\right\}$, where $\norm{.}$ represents the Euclidean norm in $\mathbb{R}^{n}$, which together with Cauchy-Schwarz inequality lead to following result:
\begin{lemma}\label{lem1} For $x,y,z\in\mathcal{L}_{n}^{+}$ we have
\begin{itemize}
\item[\rm(a)] $\sum\limits_{k=1}^{n-1}(x_{k}y_{k})\leq x_{n}y_{n}$. The equality holds if $x,y\in\partial \mathcal{L}_{n}^{+}$ and are colinear.
\item[\rm(b)] $\sum\limits_{k=1}^{n-1}(x_{k}y_{k}z_{k})\leq x_{n}y_{n}z_{n}$. The equality holds if $x,y,z\in\partial \mathcal{L}_{n}^{+}$ and are colinear.
\item[\rm(c)]  For any positive integer $l$, $\mathcal{L}_{n}^{+}$ contains the vector $x^{(l)}$, obtained by raising $l$-th power of entries of $x$, that is, $x^{(l)}=\left(x_{1}^{l},x_{2}^{l},\ldots,x_{n}^{l}  \right)\in\mathcal{L}_{n}^{+}$.
\end{itemize}
\end{lemma}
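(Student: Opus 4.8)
The plan is to reduce all three parts to one structural fact: $\mathcal{L}^{n}_{+}$ is closed under the entrywise (Hadamard) product $x\circ y:=(x_{1}y_{1},\ldots,x_{n}y_{n})$. I would first prove that $x,y\in\mathcal{L}^{n}_{+}$ implies $x\circ y\in\mathcal{L}^{n}_{+}$: the sign condition $(x\circ y)_{n}=x_{n}y_{n}\geq 0$ is immediate, and for the cone inequality I would use the elementary estimate $\sum_{k}a_{k}b_{k}\leq\big(\sum_{k}a_{k}\big)\big(\sum_{k}b_{k}\big)$ valid for nonnegative reals (the right side equals $\sum_{j,k}a_{j}b_{k}$ and the discarded cross terms are nonnegative), applied with $a_{k}=x_{k}^{2}$ and $b_{k}=y_{k}^{2}$ for $k=1,\ldots,n-1$; this gives $\sum_{k=1}^{n-1}(x_{k}y_{k})^{2}\leq\big(\sum_{k=1}^{n-1}x_{k}^{2}\big)\big(\sum_{k=1}^{n-1}y_{k}^{2}\big)\leq x_{n}^{2}y_{n}^{2}$, which is exactly $x\circ y\in\mathcal{L}^{n}_{+}$. (Alternatively one could run everything off the pointwise bound $|x_{k}|\leq x_{n}$ for $k<n$, itself immediate from $x_{k}^{2}\leq\sum_{i<n}x_{i}^{2}\leq x_{n}^{2}$.)

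Granting this closure, part (a) is just the Cauchy--Schwarz inequality restricted to the first $n-1$ coordinates: $\sum_{k=1}^{n-1}x_{k}y_{k}\leq\big(\sum_{k=1}^{n-1}x_{k}^{2}\big)^{1/2}\big(\sum_{k=1}^{n-1}y_{k}^{2}\big)^{1/2}\leq x_{n}y_{n}$, using $x_{n},y_{n}\geq 0$. Part (c) follows by induction on $l$, since $x^{(1)}=x\in\mathcal{L}^{n}_{+}$ and $x^{(l)}=x\circ x^{(l-1)}$. Part (b) follows by applying (a) to the pair $(x\circ y,\, z)$, which is legitimate because $x\circ y\in\mathcal{L}^{n}_{+}$: the conclusion $\sum_{k=1}^{n-1}(x\circ y)_{k}z_{k}\leq(x\circ y)_{n}z_{n}$ reads precisely $\sum_{k=1}^{n-1}x_{k}y_{k}z_{k}\leq x_{n}y_{n}z_{n}$.

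For the equality clauses I would argue directly rather than through the closure. In (a), if $x,y\in\partial\mathcal{L}^{n}_{+}$ are colinear, say $y=\beta x$ with $\beta\geq 0$, then $\sum_{k=1}^{n-1}x_{k}y_{k}=\beta\sum_{k=1}^{n-1}x_{k}^{2}=\beta x_{n}^{2}=x_{n}y_{n}$, using $\sum_{k<n}x_{k}^{2}=x_{n}^{2}$ on the boundary. The equality clause of (b) is the step I expect to be the main obstacle and would scrutinise most carefully: the reduction of (b) to (a) passes through $w=x\circ y$, and $w$ need not lie on $\partial\mathcal{L}^{n}_{+}$ even when $x$ and $y$ do, so equality cannot simply be inherited from (a). The honest route is to substitute $y=\beta x$, $z=\gamma x$ ($\beta,\gamma\geq 0$) and verify the resulting scalar identity on the boundary directly; here I would check whether ``colinear'' is already the right hypothesis or whether a stronger condition (such as $x$ being supported on a single coordinate among $1,\ldots,n-1$) is what is actually needed.
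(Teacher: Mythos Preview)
Your argument is correct and considerably more detailed than the paper's, which gives no proof beyond the one-line remark that the lemma follows from the description $\mathcal{L}^n_+=\{x:\norm{x}\leq\sqrt{2}\,x_n\}$ together with the Cauchy--Schwarz inequality. Your Hadamard-product closure $x\circ y\in\mathcal{L}^n_+$ is a clean unifying device for (b) and (c) that the paper does not make explicit; for (a) your Cauchy--Schwarz argument is exactly the intended one.

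Your caution about the equality clause in (b) is well placed: as stated, that clause is actually false. Take $n=3$ and $x=y=z=(1,1,\sqrt{2})\in\partial\mathcal{L}^3_+$; these are trivially colinear, yet
\[
\sum_{k=1}^{2}x_ky_kz_k=2<2\sqrt{2}=x_3y_3z_3,
\]
so the inequality is strict. For colinear boundary vectors $y=\beta x$, $z=\gamma x$ with $\beta,\gamma\geq 0$, the equality question reduces to whether $\sum_{k<n}x_k^{3}=x_n^{3}$ under the constraint $\sum_{k<n}x_k^{2}=x_n^{2}$, and this forces at most one of $x_1,\ldots,x_{n-1}$ to be nonzero (and that entry to be nonnegative)---exactly the stronger hypothesis you anticipated.
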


Next result on $\mathcal{L}^n_+$-semipositive matrices, is an immediate consequence of the definition and hence the proof is skipped.

\begin{proposition}\label{Prop1}
For $A=[\widetilde{a}_1,\ldots,\widetilde{a}_{n}]\in \mathbb{R}^{n\times n}$, the followings results hold:
\begin{itemize}
\item [\rm(i)] If $A$ is $\mathcal{L}^n_+$-semipositive, then  so is $\alpha A$, for any $\alpha>0$.
\item [\rm(ii)] If $\widetilde{a}_{n}\in\Int(\mathcal{L}^n_+)$, then $A$ is $\mathcal{L}^n_+$-semipositive with $e_n$ as the semipositivity vector.
\item [\rm(iii)] For any permutation matrix $P=(p_{ij})$  with $p_{nn}=1$,  $A$ is $\mathcal{L}^n_+$-semipositive if and only if $PA$ is $\mathcal{L}^n_+$-semipositive.
\item [\rm(iv)] If $A$ is block lower triangular matrix of the form 
\[A=\left[\begin{array}{lc}
A_{11} & 0  \\
A_{21} & A_{22}
\end{array}\right]\]
where $A_{22}\in\mathbb{R}^{k\times k}$, then $A_{22}$ is $\mathcal{L}^k_+$-semipositive implies $A$ is $\mathcal{L}^n_+$-semipositive. In that case if $x$ is a semipositivity vector of $A_{22}$, then $z=[0~~ x]^T$ is a semipositivity vector of $A$.
\end{itemize}
\end{proposition}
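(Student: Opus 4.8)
The plan is to obtain all four parts directly from the definition of $\mathcal{L}^n_+$-semipositivity together with two elementary facts about the Lorentz cone: that $\Int(\mathcal{L}^n_+)=\{x:\ x_n>0,\ \sum_{i=1}^{n-1}x_i^2<x_n^2\}$, and that $\mathcal{L}^n_+$ and $\Int(\mathcal{L}^n_+)$ are invariant under multiplication by positive scalars and under linear maps that preserve the Euclidean norm and fix the last coordinate. For (i), if $x\in\mathcal{L}^n_+$ satisfies $Ax\in\Int(\mathcal{L}^n_+)$, then for $\alpha>0$ we have $(\alpha A)x=\alpha(Ax)\in\Int(\mathcal{L}^n_+)$, so the same $x$ is a semipositivity vector of $\alpha A$. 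For (ii), note $e_n\in\mathcal{L}^n_+$ because its last coordinate is $1\ge 0$ and the sum of squares of the remaining coordinates is $0\le 1$; since $Ae_n=\widetilde a_n\in\Int(\mathcal{L}^n_+)$ by hypothesis, $e_n$ is a semipositivity vector of $A$.

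For (iii), I would first record the key observation that a permutation matrix $P=(p_{ij})$ with $p_{nn}=1$ permutes only the first $n-1$ coordinates, so $(Py)_n=y_n$ and $\norm{Py}=\norm{y}$ for every $y$; hence $P\mathcal{L}^n_+=\mathcal{L}^n_+$ and $P(\Int(\mathcal{L}^n_+))=\Int(\mathcal{L}^n_+)$, and the same holds for $P^{-1}=P^T$, which is again of this form. Then if $x$ is a semipositivity vector of $A$, it is also one for $PA$ since $PAx\in P(\Int(\mathcal{L}^n_+))=\Int(\mathcal{L}^n_+)$; applying $P^{-1}$ to a semipositivity vector of $PA$ gives the converse.

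For (iv), write $z=[0~~x]^T$ with $0\in\mathbb{R}^{n-k}$ and $x\in\mathbb{R}^k$ a semipositivity vector of $A_{22}$. I would verify two things. First, $z\in\mathcal{L}^n_+$: its last coordinate equals $x_k\ge 0$, and $\sum_{i=1}^{n-1}z_i^2=\sum_{j=1}^{k-1}x_j^2\le x_k^2=z_n^2$ because $x\in\mathcal{L}^k_+$. Second, the block lower triangular form gives $Az=[0~~A_{22}x]^T$; since $A_{22}x\in\Int(\mathcal{L}^k_+)$ we get $(Az)_n=(A_{22}x)_k>0$ and $\sum_{i=1}^{n-1}(Az)_i^2=\sum_{j=1}^{k-1}(A_{22}x)_j^2<(A_{22}x)_k^2=(Az)_n^2$, so $Az\in\Int(\mathcal{L}^n_+)$. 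Thus $z$ is a semipositivity vector of $A$.

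No step here presents a genuine difficulty; the result is indeed immediate from the definition, and the only places needing a little care are the observation in (iii) that permutation matrices fixing the last coordinate lie in $\pi(\mathcal{L}^n_+)$ (this fails for a general permutation matrix, so the hypothesis $p_{nn}=1$ cannot be dropped), and the bookkeeping in (iv) that padding $x$ with leading zeros keeps the distinguished last coordinate of $z$ and of $Az$ in the correct slot.
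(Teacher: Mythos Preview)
Your proposal is correct and is precisely the kind of direct verification the paper has in mind; indeed the paper explicitly states that the result ``is an immediate consequence of the definition and hence the proof is skipped,'' so there is no proof in the paper to compare against. One small wording slip in your part~(iii): the converse does not come from applying $P^{-1}$ to a semipositivity vector $x$ of $PA$ (the vector $P^{-1}x$ need not satisfy $A(P^{-1}x)\in\Int(\mathcal{L}^n_+)$); rather, the \emph{same} $x$ works for $A$, since $Ax=P^{-1}(PAx)\in P^{-1}(\Int(\mathcal{L}^n_+))=\Int(\mathcal{L}^n_+)$, or equivalently one applies the forward direction with $P^{-1}$ in place of $P$.
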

\begin{rk}\label{rk1}\rm From the above result we observe that the class $S\left(\mathcal{L}^n_+ \right)$ of $\mathcal{L}^n_+$-semipositives is closed with respect to nonnegative scalar multiplication, but following example shows that $S\left(\mathcal{L}^n_+ \right)$ is not a cone in $\mathbb{R}^{n\times n}$.
\end{rk}

\begin{example}\rm 
The matrices $A=\left[\begin{array}{ll}
1 & 4 \\
5 & 3 
\end{array}\right]$ and $B=\left[\begin{array}{ll}
4 & 3 \\
1 & 2 
\end{array}\right]$ are $\mathcal{L}^2_+$-semipositive  matrices with semipositivity vectors $[\frac{1}{2},1]^T$ and $[-\frac{1}{2},1]^T$, respectively. We show that $A+B$ is not $\mathcal{L}^2_+$-semipositive. Suppose that there exists a vector $x\in \mathcal{L}^2_+$ such that 
\[(A+B)x=\left[\begin{array}{l}
5x_1+7x_2 \\
6x_1+5x_2
\end{array}\right]\in \Int(\mathcal{L}^2_+)\]
Then $|x_1|\leq x_2$ and $|(5x_1+7x_2)|<6x_1+5x_2$, so that $2x_2<x_1$. Since $x_2\geq 0$ and hence $x_1>0$,  $2x_2<x_1$ contradicts the inequality  $x_1\leq x_2$.

\end{example}

Next theorem presents a characterization of rank one $\mathcal{L}^n_+$-semipositive matrices. In proving the necessary part of the following result, a similar argument is used as that of available for Case 1 in Theorem 2.1 of~\cite{ChaJM18}.

%

\begin{theorem}
Let $A=uv^T\in \mathbb{R}^{n\times n }$ be a non-zero rank one matrix with $u_n\geq 0$. Then $A$ is an $\mathcal{L}^n_+$-semipositive matrix if and only if $u\in \Int (\mathcal{L}^n_+)$ and $v\notin -\mathcal{L}^n_+$.
\end{theorem}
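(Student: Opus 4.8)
The plan is to exploit the rank-one structure together with the self-duality of $\mathcal{L}^n_+$. The starting observation is that for every $x\in\mathbb{R}^n$ one has $Ax = uv^Tx = \langle v,x\rangle\,u$, so $A$ is $\mathcal{L}^n_+$-semipositive precisely when there is some $x\in\mathcal{L}^n_+$ for which the scalar multiple $\langle v,x\rangle\,u$ lies in $\Int(\mathcal{L}^n_+)$. Thus the whole question reduces to understanding, for $\lambda\in\mathbb{R}$, when $\lambda u\in\Int(\mathcal{L}^n_+)$, and for which $x\in\mathcal{L}^n_+$ the value $\lambda=\langle v,x\rangle$ can be made to work.

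Next I would record the elementary fact that, for $\lambda\in\mathbb{R}$ and a vector $u$ with $u_n\geq 0$, one has $\lambda u\in\Int(\mathcal{L}^n_+)$ if and only if $\lambda>0$ and $u\in\Int(\mathcal{L}^n_+)$. Indeed, $\Int(\mathcal{L}^n_+)=\{w:\ \norm{w}<\sqrt{2}\,w_n\}$ consists of vectors with strictly positive last coordinate and does not contain $0$; since $(\lambda u)_n=\lambda u_n$ and $u_n\geq 0$, the cases $\lambda\leq 0$ are impossible, and when $\lambda>0$ the claim follows because $\Int(\mathcal{L}^n_+)$ is invariant under multiplication by positive scalars. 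Here the hypothesis $u_n\geq 0$ is exactly what discards the negative-scalar branch. Combining this with the previous paragraph, $A$ is $\mathcal{L}^n_+$-semipositive if and only if $u\in\Int(\mathcal{L}^n_+)$ and there exists $x\in\mathcal{L}^n_+$ with $\langle v,x\rangle>0$.

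It then remains to identify when such an $x$ exists. Its negation says $\langle v,x\rangle\leq 0$ for all $x\in\mathcal{L}^n_+$, i.e. $-v\in(\mathcal{L}^n_+)^{*}$, and since $\mathcal{L}^n_+$ is self-dual this is the same as $-v\in\mathcal{L}^n_+$, that is $v\in-\mathcal{L}^n_+$. Hence a suitable $x$ exists if and only if $v\notin-\mathcal{L}^n_+$, which together with the reduction above yields the asserted equivalence. To make the two implications explicit: for the forward direction, start from $x\in\mathcal{L}^n_+$ with $\langle v,x\rangle\,u\in\Int(\mathcal{L}^n_+)$, set $\lambda=\langle v,x\rangle$, use $u_n\geq 0$ to rule out $\lambda\leq 0$, divide by $\lambda>0$ to conclude $u\in\Int(\mathcal{L}^n_+)$, and note that $\langle v,x\rangle>0$ with $x\in\mathcal{L}^n_+$ forces $v\notin-\mathcal{L}^n_+$; for the converse, pick $x\in\mathcal{L}^n_+$ with $\langle v,x\rangle>0$ (available precisely because $v\notin-\mathcal{L}^n_+$) and observe $Ax=\langle v,x\rangle\,u\in\Int(\mathcal{L}^n_+)$.

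I do not expect a genuine obstacle here; the argument is short once one sees the reduction $Ax=\langle v,x\rangle\,u$. The only two points needing care are the scalar-multiple lemma, where $u_n\geq 0$ is essential, and the use of self-duality of $\mathcal{L}^n_+$ to translate solvability of $\langle v,x\rangle>0$ over the cone into the clean membership statement $v\notin-\mathcal{L}^n_+$; this latter step could equivalently be phrased through the theorem of the alternative (Theorem~\ref{one_is_true}).
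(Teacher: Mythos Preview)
Your proof is correct. The forward direction matches the paper's argument essentially word for word: both observe that $Ax=\langle v,x\rangle u$, rule out $\langle v,x\rangle\leq 0$ via the hypothesis $u_n\geq 0$, and conclude $u\in\Int(\mathcal{L}^n_+)$ together with $v\notin-\mathcal{L}^n_+$. The converse, however, is handled quite differently. You invoke self-duality of $\mathcal{L}^n_+$ to argue non-constructively that ``$v\notin-\mathcal{L}^n_+$'' is equivalent to the existence of some $x\in\mathcal{L}^n_+$ with $\langle v,x\rangle>0$, which immediately closes the argument. The paper instead builds explicit semipositivity vectors by a case analysis on~$v$: when $v\in\mathcal{L}^n_+$ any interior point works; when $v\notin\mathcal{L}^n_+\cup(-\mathcal{L}^n_+)$ and $v_n\geq 0$ it takes $q=v+\norm{v}e_n$; and when $v_n<0$ it takes $p=(\norm{v}^2-\epsilon)e_n-v_nv$ for small $\epsilon>0$, verifying in each case that the candidate lies in $\mathcal{L}^n_+$ and satisfies $\langle v,\cdot\rangle>0$. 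Your route is shorter and more conceptual, and it isolates exactly where self-duality (equivalently, Theorem~\ref{one_is_true}) enters; the paper's route has the compensating advantage of producing a concrete semipositivity vector for~$A$.
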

\begin{proof}
Let $A$ be an $\mathcal{L}^n_+$-semipositive matrix. Choose an $x\in \Int (\mathcal{L}^n_+)$ such that $Ax\in \Int(\mathcal{L}^n_+)$. Then $uv^Tx\in \Int(\mathcal{L}^n_+)$, and hence  $v^Tx\neq 0$.  If $v^Tx<0$, we get $u\in -\Int(\mathcal{L}^n_+)$, which is not possible as $u_n\geq 0$. Therefore $v^Tx>0$, which implies that $u\in \Int(\mathcal{L}^n_+)$ and $v\notin -\mathcal{L}^n_+$.


Conversely, assume that $v\notin -\mathcal{L}^n_+$ and $u\in \Int(\mathcal{L}^n_+)$. If $v\in \mathcal{L}^n_+$, then any $x\in \Int (\mathcal{L}^n_+)$ is a semipositivity vector.  Suppose that $v\notin \mathcal{L}^n_+ \cup -\mathcal{L}^n_+$. We now have the following two cases.\\[2mm]
\textbf{Case I}: Let $v_n\geq  0$. We show that  $q=v+\norm{v}e_n\in \mathcal{L}^n_+$ is a semipositivity vector of $A$. Notice that $q_n=v_n+\norm{v}> 0$, since $v\neq 0$ and
\[\sum_{i=1}^{n-1}q^2_i=\sum_{i=1}^{n-1}v^2_i\leq \norm{v}^2\leq (v_n+\norm{v})^2=q_n^2\]
Thus $q\in \mathcal{L}^n_+$. Now $\langle q,v \rangle =\norm{v}^2+v_n> 0$, and so
$Aq=uv^Tq\in \Int(\mathcal{L}^n_+)$, as $u\in \Int(\mathcal{L}^n_+)$.\\[2mm]
\textbf{Case II}: Assume that $v_n<0$. We claim that the vector $p=(\norm{v}^2-\epsilon)e_n-v_nv$ is a semipositivity vector of $A$, for suitably chosen $\epsilon>0$. Observe that 
\[p_n=\norm{v}^2-v_n^2-\epsilon,~~\text{ and }~~\sum_{i=1}^{n-1}p^2_i=v_n^2\sum_{i=1}^{n-1}v^2_i=v_n^2(\norm{v}^2-v_n^2)\]
As $v\notin \mathcal{L}^n_+ \cup -\mathcal{L}^n_+$, so $v_n^2<\norm{v}^2-v_n^2$ and hence  \[\sum_{i=1}^{n-1}p^2_i<(\norm{v}^2-v_n^2)^2\]

Choose $\epsilon>0$ small enough so that  
\[p_n=\norm{v}^2-v_n^2-\epsilon>0~~\text{ and }~~\sum_{i=1}^{n-1}p^2_i<(\norm{v}^2-v_n^2-\epsilon)^2=p_n^2\]
Then $p\in \Int (\mathcal{L}^n_+)$. Since $v_n<0$, so we have that
\[v^Tp=\langle p,v \rangle = \norm{v}^2v_n-v_n\norm{v}^2-\epsilon v_n=-\epsilon v_n>0\]
Since $u\in \Int(\mathcal{L}^n_+)$, so we get $Ap=uv^Tp\in \Int(\mathcal{L}^n_+)$. Thus the result holds.
\end{proof}
%

In Proposition~\ref{Prop1}(ii), we observe that if the end column of a matrix lies in  $\Int(\mathcal{L}^n_+)$, then the matrix is $\mathcal{L}^n_+$-semipositive with semipositivity vector  $e_n $. In fact, if the end column lies in the boundary $\partial\mathcal{L}_{n}^{+}$, then also the matrix is $\mathcal{L}_{n}^{+}$-semipositive under certain condition. The details of it follows in the next theorem.
\begin{theorem}\label{thm1}
Let $A\in \mathbb{R}^{n\times n}$ be a matrix such that  $\mathcal{L}^n_+$ contains the $n$-th column.  For some $k\neq n$, if the $k$-th column of $A$ belongs to $\Int\left(\mathcal{L}^n_+\right)$, then $A$ is $\mathcal{L}^n_+$-semipositive.
\end{theorem}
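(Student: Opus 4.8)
The plan is to write down an explicit semipositivity vector obtained by perturbing $e_n$ in the direction of the "good" column. Write $A=[\widetilde a_1,\dots,\widetilde a_n]$, so by hypothesis $\widetilde a_n\in\mathcal{L}^n_+$ and $\widetilde a_k\in\Int(\mathcal{L}^n_+)$ for some index $k\neq n$. I would take $x=e_n+t\,e_k$ for a parameter $t\in(0,1]$. Since the only nonzero coordinates of $x$ are $x_k=t$ and $x_n=1$, one has $\sum_{i=1}^{n-1}x_i^2=t^2\le 1=x_n^2$, hence $x\in\mathcal{L}^n_+$ (and $x\in\Int(\mathcal{L}^n_+)$ when $t<1$). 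Moreover $Ax=\widetilde a_n+t\,\widetilde a_k$.

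The key step is the elementary observation that for any proper cone $K$ one has $\alpha\,\Int(K)\subseteq\Int(K)$ for every $\alpha>0$ and $K+\Int(K)\subseteq\Int(K)$: indeed, if $B(z,\epsilon)\subseteq K$ then $B(\alpha z,\alpha\epsilon)=\alpha B(z,\epsilon)\subseteq\alpha K\subseteq K$, and for $y\in K$ we get $B(y+z,\epsilon)=y+B(z,\epsilon)\subseteq y+K\subseteq K$. Applying this with $K=\mathcal{L}^n_+$, $y=\widetilde a_n\in\mathcal{L}^n_+$, and $z=t\,\widetilde a_k\in\Int(\mathcal{L}^n_+)$ yields $Ax=\widetilde a_n+t\,\widetilde a_k\in\Int(\mathcal{L}^n_+)$. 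Thus $x$ is a semipositivity vector of $A$, so $A$ is $\mathcal{L}^n_+$-semipositive.

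There is essentially no serious obstacle in this argument; the only points demanding a little care are that the perturbation must be a \emph{positive} multiple of $e_k$ (so that $t\widetilde a_k$ stays in $\Int(\mathcal{L}^n_+)$ and the sum with $\widetilde a_n$ remains interior) and that $|t|\le 1$ must be enforced so that $x$ itself stays inside $\mathcal{L}^n_+$. Taking $t=1$ already suffices and places $x$ on $\partial\mathcal{L}^n_+$, whereas any $t\in(0,1)$ gives $x\in\Int(\mathcal{L}^n_+)$, matching the equivalent form of the definition of $\mathcal{L}^n_+$-semipositivity. If desired, one can also phrase the conclusion as a special case of the additivity remark that $S(\mathcal{L}^n_+)$ absorbs columns in $\Int(\mathcal{L}^n_+)$, but the direct computation above is the cleanest route.
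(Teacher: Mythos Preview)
Your proof is correct and follows essentially the same approach as the paper: both take the semipositivity vector $x=e_n+\alpha e_k$ with $0<\alpha<1$ (the paper uses $\alpha$, you use $t$), so that $Ax=\widetilde a_n+\alpha\widetilde a_k$. The only cosmetic difference is in the final verification that $Ax\in\Int(\mathcal{L}^n_+)$: the paper does this by the explicit norm estimate $\|\alpha\widetilde a_k+\widetilde a_n\|\le\alpha\|\widetilde a_k\|+\|\widetilde a_n\|<\sqrt{2}(\alpha a_{nk}+a_{nn})$, whereas you invoke the general fact $K+\Int(K)\subseteq\Int(K)$ for proper cones --- this is simply the abstract form of the same triangle-inequality computation.
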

\begin{proof}
Write $A=(a_{ij})=[\widetilde{a}_1,\ldots,\widetilde{a}_n]$. By the given hypothesis we have  that
\begin{equation}\label{eq1}
\norm{\widetilde{a}_{k}}\leq \sqrt{2}a_{nk},~~\text{ and } \norm{\widetilde{a}_{n}}\leq \sqrt{2} a_{nn}
\end{equation}
Take $0< \alpha<1$, and define a vector $x\in\mathbb{R}^n$ by
\[x_{i}=\left\{\begin{array}{ll}
\alpha &\text{ if } i=k\\
1 &\text{ if } i=n\\
0 &\text{ otherwise}
\end{array} \right.\]
so that $Ax=[a_{1k}\alpha +a_{1n},a_{2k}\alpha+a_{2n},\ldots, a_{nk}\alpha+a_{nn} ]^T=\alpha \widetilde{a}_{k}+\widetilde{a}_{n}$.   Note that 
$(Ax)_n=a_{nk}\alpha+a_{nn}\geq 0$  and~\eqref{eq1} implies that
\begin{align}\label{eqth2.2}
\norm{Ax}=\norm{\alpha \widetilde{a}_{k}+\widetilde{a}_{n}}\leq \alpha\norm{\widetilde{a}_{k}}+\norm{\widetilde{a}_{n}}< \sqrt{2}(a_{nk}\alpha +a_{nn})=\sqrt{2}(Ax)_n
\end{align}
Hence the result follows.
\end{proof}
%

The conditions in Theorem~\ref{thm1} are not necessary for the semipositivity of matrices over $\mathcal{L}_{+}^{n}$.  Following examples illustrate this fact.
\begin{example}\rm
The matrix $A=\left[\begin{array}{ll}
1 & 4 \\
5 & 3 
\end{array}\right]$is an $\mathcal{L}^2_+$-semipositive matrix with semipositivity vector $y=[\frac{1}{2},1]^T$, and the end column doesn't belong to $\mathcal{L}^2_+$.
\end{example}
In fact, next example provides an  $\mathcal{L}^n_+$-semipositive matrix with no column belongs to $\mathcal{L}^n_+$.
\begin{example}\rm
The matrix $A=\left[\begin{array}{ll}
4 & 3 \\
1 & 2 
\end{array}\right]$ is an $\mathcal{L}^2_+$-semipositive matrix with semipositivity vector $y=[-\frac{1}{2},1]^T$, whereas none of the column belongs to $\mathcal{L}^{n}_{+}$.
\end{example}

In Remark~\ref{rk1}, we notice that sum of two $\mathcal{L}^n_+$-semipositive matrices is not necessarily an $\mathcal{L}^n_+$-semipositive matrix. In the succeeding  theorem, we present a weaker class of matrices for which this holds.
\begin{theorem}\label{thm2}
Suppose that  $D=\diag(d_1,\ldots,d_n)$ is a diagonal matrix with $\norm{D}=d_n$.  If  $A$ is an $\mathcal{L}^n_+$-semipositive matrix, then so is $A+D$.
\end{theorem}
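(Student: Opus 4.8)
The plan is to show that the hypothesis $\norm{D}=d_n$ forces $D$ to leave the Lorentz cone invariant, i.e.\ $D\in\pi(\mathcal{L}^n_+)$, and then to reuse the \emph{same} semipositivity vector of $A$ for $A+D$. The point is that $\norm{D}=\max_{1\le i\le n}|d_i|$, so $\norm{D}=d_n$ says exactly that $d_n\ge 0$ and $|d_i|\le d_n$ for every $i$; this is the precise amount of control one needs on $D$.

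First I would record the invariance claim. Using the description $\mathcal{L}^n_+=\{x:\norm{x}\le\sqrt{2}\,x_n\}$ from the line preceding Lemma~\ref{lem1}, take any $x\in\mathcal{L}^n_+$. Then $(Dx)_n=d_nx_n\ge 0$ and
\[
\norm{Dx}^2=\sum_{i=1}^n d_i^2x_i^2\le \norm{D}^2\sum_{i=1}^n x_i^2=d_n^2\norm{x}^2\le d_n^2\,(2x_n^2)=2\,(d_nx_n)^2=2\,(Dx)_n^2,
\]
so $\norm{Dx}\le\sqrt{2}\,(Dx)_n$, i.e.\ $Dx\in\mathcal{L}^n_+$. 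Hence $D(\mathcal{L}^n_+)\subseteq\mathcal{L}^n_+$.

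Next I would use the standard fact that for a proper (hence convex) cone $K$ one has $\Int(K)+K\subseteq\Int(K)$: if $B(a,\varepsilon)\subseteq K$ and $b\in K$, then $B(a+b,\varepsilon)=B(a,\varepsilon)+b\subseteq K+K\subseteq K$. Now, since $A$ is $\mathcal{L}^n_+$-semipositive, pick $x\in\Int(\mathcal{L}^n_+)$ with $Ax\in\Int(\mathcal{L}^n_+)$. Then $Dx\in\mathcal{L}^n_+$ by the previous step, so
\[
(A+D)x=Ax+Dx\in\Int(\mathcal{L}^n_+)+\mathcal{L}^n_+\subseteq\Int(\mathcal{L}^n_+),
\]
which shows $A+D$ is $\mathcal{L}^n_+$-semipositive, with $x$ again serving as a semipositivity vector.

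There is no serious obstacle here: the whole content is the observation that $\norm{D}=d_n$ is equivalent to $D\in\pi(\mathcal{L}^n_+)$, after which the result is immediate from $\Int(K)+\pi(K)\cdot\Int(K)$-type absorption. If one wanted a slightly more general statement, the same argument shows that $A+B$ stays $\mathcal{L}^n_+$-semipositive whenever $A$ is $\mathcal{L}^n_+$-semipositive and $B\in\pi(\mathcal{L}^n_+)$; the diagonal hypothesis of the theorem is merely a concrete, easily checkable sufficient condition for $B=D$ to lie in $\pi(\mathcal{L}^n_+)$.
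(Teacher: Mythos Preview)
Your proof is correct and follows essentially the same approach as the paper: both reuse the semipositivity vector $x$ of $A$ and verify that $(A+D)x\in\Int(\mathcal{L}^n_+)$ via the observation that $Dx\in\mathcal{L}^n_+$ (the paper does this by a direct triangle-inequality estimate in the style of its equation~\eqref{eqth2.2}, while you phrase it as $D\in\pi(\mathcal{L}^n_+)$ together with $\Int(K)+K\subseteq\Int(K)$). Your closing remark that the argument actually yields the more general statement ``$A+B\in S(\mathcal{L}^n_+)$ whenever $B\in\pi(\mathcal{L}^n_+)$'' is a clean observation that the paper does not state explicitly.
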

\begin{proof}
Since $A$ is $\mathcal{L}^n_+$-semipositive, choose a vector $x\in\Int(\mathcal{L}^n_+)$ such that $y=Ax\in \mathcal{L}^n_+$. Note that  $(y_n+d_nx_n)\geq 0$ and as shown in equation~\eqref{eqth2.2},
 $A+D\in S(\mathcal{L}_{+}^{n})$ with the same semipositivity vector.
\end{proof}

 \begin{rk}\rm  Theorem~\ref{thm2} holds for all diagonal matrix $D=\diag(d_1,d_2,\ldots,d_n)$ with $d=[d_1,\ldots,d_n]^T\in  \mathcal{L}^n_+$.
\end{rk}

Theorem~\ref{thm1} gives a sufficient condition for $\mathcal{L}^n_+$-semipositive matrices. We now furnish a necessary condition for the same.
\begin{theorem}\label{thm3}
If $A$ is a  $\mathcal{L}^n_+$-semipositive matrix, then  $n$-th row vector of $A$ does not belongs to $-\mathcal{L}^n_+$.
\end{theorem}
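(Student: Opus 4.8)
The plan is to argue by contradiction using the theorem of alternatives (Theorem~\ref{one_is_true}) together with the self-duality of $\mathcal{L}^n_+$. Suppose $A$ is $\mathcal{L}^n_+$-semipositive but the $n$-th row of $A$, call it $r^T$, lies in $-\mathcal{L}^n_+$; equivalently $-r\in\mathcal{L}^n_+$, so $-r_n\geq 0$ and $\sum_{i=1}^{n-1}r_i^2\leq r_n^2$. I want to produce a vector certifying alternative (ii), i.e. some $x\neq 0$ with $x\in -\mathcal{L}^n_+$ (here $K_2^*=\mathcal{L}^n_+$ by self-duality, so $-K_2^*=-\mathcal{L}^n_+$) and $A^Tx\in \mathcal{L}^n_+$. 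The natural candidate is $x=-e_n$: then $x\in-\mathcal{L}^n_+$ since $e_n\in\mathcal{L}^n_+$, and $A^T(-e_n)=-(A^Te_n)=-r$, which by hypothesis lies in $\mathcal{L}^n_+$. This immediately gives alternative (ii), contradicting the assumption that $A$ is $\mathcal{L}^n_+$-semipositive, since exactly one of the two alternatives can hold.

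Alternatively, and perhaps more transparently for the reader, I could argue directly. If $A$ is $\mathcal{L}^n_+$-semipositive, pick $x\in\mathcal{L}^n_+$ with $y=Ax\in\Int(\mathcal{L}^n_+)$, so in particular $y_n>0$ and $\norm{y}<\sqrt{2}\,y_n$. Now $y_n=(Ax)_n=\langle r,x\rangle$ where $r$ is the $n$-th row. If $r\in-\mathcal{L}^n_+$ then $-r\in\mathcal{L}^n_+$, and by Lemma~\ref{lem1}(a) applied to the pair $-r,\,x\in\mathcal{L}^n_+$ we get $\sum_{k=1}^{n-1}(-r_k)x_k\leq(-r_n)x_n$, i.e. $-\langle r,x\rangle\leq 0$... this only gives $\langle r,x\rangle\geq 0$, which is not yet a contradiction. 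To sharpen it I would instead note that $-r\in\mathcal{L}^n_+$ means $\langle -r,z\rangle\geq 0$ for all $z\in\mathcal{L}^n_+$ (self-duality), hence $\langle r,x\rangle\leq 0$, so $y_n=\langle r,x\rangle\leq 0$, contradicting $y_n>0$. This is really the same argument as the first one, unpacked.

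I expect the only subtle point is getting the dual-cone bookkeeping exactly right: one must invoke that $\mathcal{L}^n_+$ is self-dual (stated in the introduction), so that $x\in\mathcal{L}^n_+$ and $w\in\mathcal{L}^n_+$ force $\langle x,w\rangle\geq 0$, and in Theorem~\ref{one_is_true} the set $-K_2^*$ becomes $-\mathcal{L}^n_+$. Everything else is immediate. I would write the contradiction version using self-duality directly, since it avoids even citing the theorem of alternatives: from $-r\in\mathcal{L}^n_+=(\mathcal{L}^n_+)^*$ and $x\in\mathcal{L}^n_+$ we get $0\leq\langle -r,x\rangle=-y_n<0$, a contradiction. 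Hence the $n$-th row of $A$ cannot lie in $-\mathcal{L}^n_+$.
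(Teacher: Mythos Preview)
Your final direct argument---pick $x\in\mathcal{L}^n_+$ with $Ax\in\Int(\mathcal{L}^n_+)$, note $(Ax)_n=\langle r,x\rangle>0$, and then use self-duality of $\mathcal{L}^n_+$ to get $\langle -r,x\rangle\geq 0$ when $-r\in\mathcal{L}^n_+$, a contradiction---is exactly the paper's proof. Your alternative via Theorem~\ref{one_is_true} with the certificate $x=-e_n$ is also correct and amounts to the same self-duality observation repackaged.
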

\begin{proof}
As $A$ be a $\mathcal{L}^n_+$-semipositive, there exist $x\in\Int(\mathcal{L}^n_+)$ such that  $\norm{Ax}<\sqrt{2}(Ax)_n$ and $(Ax)_n>0$, that is, $\langle a_n, x\rangle >0$, where $a_n$ is the $n$-th row vector of $A$. If $a_n\in -\mathcal{L}^n_+$, then $-a_n,x\in \mathcal{L}^n_+$ implies that $\langle -a_n, x\rangle \geq 0$,  which is a contradiction.
\end{proof}

Succeeding example explains the fact that the converse of Theorem~\ref{thm3} may not be true in general.
\begin{example}\rm
The 3rd row vector of the matrix $A=\begin{bmatrix}
1 & 1 & 2 \\
1 & 1 & 4 \\
1 & 1 & 1
\end{bmatrix}$ does not belongs to $-\mathcal{L}_3^+$. Taking the vector $y=(1, 1, -3)^T$, it can be observed that $y\in -\mathcal{L}^n_+$ and $A^Ty\in \mathcal{L}^n_+$, therefore by Theorem~\ref{one_is_true}, $A$ is not $\mathcal{L}^n_+$-semipositive.
\end{example} 
The previous example exhibits the fact that the condition provided in Theorem~\ref{thm3}, is not sufficient to check the semipositivity of matrices over $\mathcal{L}^{n}_{+}$. We end this section by giving a sufficient condition for the same.
\begin{theorem}\label{thm4} Let $A=[a^T_1,\ldots,a^T_n]^T$ be a  matrix satisfying
\begin{equation}\label{eq1-thm4}
\norm{a_1}^2+\ldots+\norm{a_{n-1}}^2< \frac{1}{2}\norm{a_n}^2 \text{  and }~a_{nn}\geq 0.
\end{equation}
Then $A$ is $\mathcal{L}^n_+$-semipositive.
\end{theorem}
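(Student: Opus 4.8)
The plan is to exhibit an explicit semipositivity vector. The hypothesis $\|a_1\|^2 + \cdots + \|a_{n-1}\|^2 < \frac12\|a_n\|^2$ says that the last row $a_n$ dominates all the other rows simultaneously, so the natural guess is to take $x = a_n^{T}$ (the last row written as a column vector), or a small perturbation of it, and check that $Ax \in \Int(\mathcal{L}^n_+)$. With this choice, $(Ax)_n = \langle a_n, a_n^T\rangle = \|a_n\|^2 > 0$ (the matrix is nonzero since $a_n\neq 0$), and for $i<n$ we have $(Ax)_i = \langle a_i, a_n^T\rangle$, which by Cauchy--Schwarz satisfies $|(Ax)_i| \le \|a_i\|\,\|a_n\|$. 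Hence
\[
\sum_{i=1}^{n-1}(Ax)_i^2 \;\le\; \|a_n\|^2\sum_{i=1}^{n-1}\|a_i\|^2 \;<\; \|a_n\|^2\cdot \tfrac12\|a_n\|^2 \;=\; \tfrac12\bigl((Ax)_n\bigr)^2.
\]
This is exactly the condition $\|Ax\| < \sqrt{2}\,(Ax)_n$ after squaring and rearranging (recall $\|Ax\|^2 = \sum_{i=1}^{n-1}(Ax)_i^2 + (Ax)_n^2$ and $\|x\|\le\sqrt2 x_n \iff \sum_{i<n}x_i^2 \le x_n^2$), so $Ax\in\Int(\mathcal{L}^n_+)$.

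The one gap in the above is that we also need the candidate vector $x$ to lie in $\mathcal{L}^n_+$ (or at least in $\mathbb{R}^n$ with $Ax\in\Int(\mathcal{L}^n_+)$ — but the definition of $K$-semipositive only requires $x\in K_1$). The vector $x=a_n^T$ need not satisfy $\|x\|\le\sqrt2\,x_n$; indeed its last coordinate is $a_{nn}$, which we only know is $\ge 0$. So the cleaner route is to instead pick $x = a_n^T + \|a_n\|\,e_n$, mimicking the trick in Case I of the rank-one theorem: this forces $x_n = a_{nn} + \|a_n\| > 0$ and $\sum_{i<n}x_i^2 = \sum_{i<n}a_{ni}^2 \le \|a_n\|^2 \le x_n^2$, so $x\in\mathcal{L}^n_+$. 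Then one recomputes $(Ax)_i = \langle a_i, a_n^T\rangle + \|a_n\|\,a_{in}$ and bounds $|(Ax)_i| \le \|a_i\|\|a_n\| + \|a_n\|\,|a_{in}| \le \|a_n\|(\|a_i\| + \|a_i\|) = 2\|a_n\|\,\|a_i\|$ using $|a_{in}|\le\|a_i\|$, while $(Ax)_n = \|a_n\|^2 + \|a_n\|\,a_{nn} \ge \|a_n\|^2 > 0$; the strict inequality $\sum_{i<n}(Ax)_i^2 \le 4\|a_n\|^2\sum_{i<n}\|a_i\|^2 < 2\|a_n\|^4 \le 2\bigl((Ax)_n\bigr)^2$ then does slightly worse by a constant, but the slack in the hypothesis ($\frac12$, not $1$) may not be enough to absorb the factor $4$.

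So the genuine obstacle is reconciling the membership constraint $x\in\mathcal{L}^n_+$ with the sharpness of the norm bound on $Ax$. I expect the right fix is to use $x = \|a_n\|\,e_n + t\,a_n^T$ for a small parameter $t>0$ and let $t\to 0$: as $t\to0$, $(Ax)_n \to \|a_n\|^2\,\|a_n\| \cdot(\text{something positive})$... more precisely $(Ax)_n = \|a_n\|\,a_{nn} + t\|a_n\|^2$ stays positive, $\sum_{i<n}(Ax)_i^2 = O(t) + O(t^2)$ terms plus the fixed term $\|a_n\|^2\sum_{i<n}a_{ni}^2$, and one checks $x\in\mathcal{L}^n_+$ for all small $t$ while the dominant balance reduces to the clean computation of the first paragraph scaled appropriately. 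Working out which combination makes both inequalities hold simultaneously — and in particular verifying that the $\frac12$ in~\eqref{eq1-thm4} is exactly what is needed — is the only real work; everything else is Cauchy--Schwarz and the definition of $\mathcal{L}^n_+$.
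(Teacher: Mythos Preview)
Your second candidate $x = a_n^{T} + \|a_n\|\,e_n$ is (up to the harmless positive scalar $\|a_n\|$) exactly the vector the paper uses, namely $y = \dfrac{a_n}{\|a_n\|} + e_n$.  So the approach is the same as the paper's; the only issue is that you abandon it one line too early.

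The tactical slip is in how you bound $(Ax)_i$.  You split $(Ax)_i = \langle a_i, a_n\rangle + \|a_n\|\,a_{in}$ and apply the triangle inequality, which costs you a factor of $2$ in amplitude (hence $4$ after squaring) and leaves you short.  Instead, keep $(Ax)_i = \langle a_i, x\rangle$ as a single inner product and apply Cauchy--Schwarz once:
\[
|(Ax)_i| \;\le\; \|a_i\|\,\|x\|.
\]
You have already verified $x\in\mathcal{L}^n_+$, so $\|x\|^2 \le 2x_n^2 = 2(\|a_n\|+a_{nn})^2$.  Since $(Ax)_n = \langle a_n,x\rangle = \|a_n\|^2 + \|a_n\|\,a_{nn} = \|a_n\|\,x_n > 0$ (the hypothesis forces $\|a_n\|>0$, and $a_{nn}\ge 0$), this gives
\[
\sum_{i=1}^{n-1}(Ax)_i^2 \;\le\; \|x\|^2\sum_{i=1}^{n-1}\|a_i\|^2
\;<\; 2x_n^2\cdot\tfrac12\|a_n\|^2
\;=\; x_n^2\,\|a_n\|^2
\;=\; \bigl((Ax)_n\bigr)^2,
\]
which is precisely the paper's computation and places $Ax\in\Int(\mathcal{L}^n_+)$.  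The $\tfrac12$ in the hypothesis is exactly what is needed to cancel the $2$ coming from $\|x\|^2\le 2x_n^2$; no parameter $t$ or limiting argument is required.
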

\begin{proof} We show that $A$ is $\mathcal{L}^n_+$-semipositive, by proving that  $y=\dfrac{a_n}{\norm{a_n}}+e_n$ is a semipositivity vector of $A$.

First we prove that $y\in \mathcal{L}^n_+$. Note that $y_n=\dfrac{a_{nn}}{\norm{a_n}}+1\geq 1>0$. We now have that 

\begin{align*}
\sum_{i=1}^{n-1} y_i^2& =\sum_{i=1}^{n-1}\frac{a_{ni}^2}{\norm{a_n}^2} =1-\frac{a_{nn}^2}{\norm{a_n}^2}\leq 1\leq y_n^2
 \end{align*} 
 Thus we have $y\in \mathcal{L}^n_+$.
 
 Next we show that $Ay\in \Int(\mathcal{L}^n_+)$. To do so, we calculate the terms $(Ay)_n$ and $(Ay)_1^2+\ldots+(Ay)_{n-1}^2 $.  Now,
  \[(Ay)_n=\langle a_n, y \rangle=\left\langle a_n, \dfrac{a_n}{\norm{a_n}}+e_n \right\rangle=(\norm{a_n}+a_{nn})\geq0\]
 Next, we have that
 \begin{align*}
\sum_{i=1}^{n-1}(Ay)_i^2= \sum_{i=1}^{n-1}  \langle a_i, y \rangle &\leq \sum_{i=1}^{n-1}  \norm{y}^2~ \norm{a_i}^2 \\
&\leq \norm{y}^2 \sum_{i=1}^{n-1}  \norm{a_i}^2\\
  & <2y_n^2\frac{1}{2}\norm{a_n}^2~~~~~~~~~~~[\text{ since }y\in\mathcal{L}^n_+ \text{ and }\eqref{eq1-thm4}]\\
  &=(\norm{a_n}+a_{nn})^2=(Ay)_n^2
 \end{align*}
 This shows that  $A$ is $\mathcal{L}^n_+$-semipositive.
\end{proof}

\section{ Special type of $\mathcal{L}^n_+$-semipositive Matrices}\label{sec3}
In the previous section we provided a few necessary conditions  and  some other sufficient conditions for a given matrix to be $\mathcal{L}^n_+$-semipositive matrix. But no conditions that characterize  $\mathcal{L}^n_+$-semipositive matrices were discussed. However a few characterization of $\mathcal{L}^n_+$-semipositive matrices can be found in~\cite{ChaJM18}. This section is mainly focused on characterization  of special classes of  $\mathcal{L}^n_+$-semipositive matrices. In particular, we study diagonal, orthogonal and lower triangular $\mathcal{L}^n_+$-semipositive matrices. 

We begin with the simplest subclass $\mathcal{L}^n_+$-semipositive matrices, that is, diagonal $\mathcal{L}^n_+$-semipositive matrices.  Observe that if the $(n,n)$-th entry of a diagonal matrix $D$ is positive, then $D$ is a semipositive matrix (with a semipositivity vector  $e_n$). In fact, the converse also follows from the definition and hence we have the following result.



%
\begin{theorem}
A diagonal matrix $D$ is $\mathcal{L}^n_+$-semipositive if and only if the $(n,n)$-th term is positive.
\end{theorem}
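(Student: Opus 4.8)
The plan is to handle the two implications separately, each being a one-line consequence of material already in hand.

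For the ``if'' direction, suppose the $(n,n)$-th entry $d_n$ is positive. Then the $n$-th column of $D=\diag(d_1,\ldots,d_n)$ is $\widetilde a_n = d_n e_n$, and since $(d_n e_n)_n = d_n > 0$ while $\sum_{i=1}^{n-1}(d_n e_n)_i^2 = 0 < d_n^2$, we have $\widetilde a_n \in \Int(\mathcal{L}^n_+)$. Proposition~\ref{Prop1}(ii) then immediately gives that $D$ is $\mathcal{L}^n_+$-semipositive, with $e_n$ as a semipositivity vector. (Equivalently, one simply checks directly that $De_n = d_n e_n \in \Int(\mathcal{L}^n_+)$.)

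For the ``only if'' direction, I would use Theorem~\ref{thm3}: if $D$ is $\mathcal{L}^n_+$-semipositive, its $n$-th row vector $a_n = (0,\ldots,0,d_n)$ does not lie in $-\mathcal{L}^n_+$. But $-\mathcal{L}^n_+ = \{x : x_n \le 0,\ \sum_{i=1}^{n-1}x_i^2 \le x_n^2\}$, and for $a_n = (0,\ldots,0,d_n)$ the inequality $\sum_{i=1}^{n-1}(a_n)_i^2 = 0 \le d_n^2$ always holds, so $a_n \in -\mathcal{L}^n_+$ precisely when $d_n \le 0$. Hence $a_n \notin -\mathcal{L}^n_+$ forces $d_n > 0$, as required. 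An alternative self-contained argument: pick $x \in \mathcal{L}^n_+$ with $Dx \in \Int(\mathcal{L}^n_+)$; since $x \in \mathcal{L}^n_+$ and $x_n = 0$ would force $x = 0$ (as $\sum_{i=1}^{n-1}x_i^2 \le x_n^2 = 0$) and then $Dx = 0 \notin \Int(\mathcal{L}^n_+)$, we must have $x_n > 0$; then $0 < (Dx)_n = d_n x_n$ yields $d_n > 0$.

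There is essentially no obstacle here; the only point requiring a moment's care is the observation that $x \in \mathcal{L}^n_+$ with $x_n = 0$ implies $x = 0$, which is needed to rule out a degenerate semipositivity vector in the direct version of the converse. I would present the argument using Theorem~\ref{thm3} for the converse, since it keeps the proof to two short lines, and note the direct alternative parenthetically.
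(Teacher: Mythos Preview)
Your proposal is correct and matches the paper's approach: the paper simply remarks that $e_n$ is a semipositivity vector when $d_n>0$ and that the converse ``follows from the definition,'' which is exactly your direct alternative argument for the ``only if'' direction. Your primary use of Theorem~\ref{thm3} for the converse is a mild repackaging of the same computation (the $n$-th row $(0,\ldots,0,d_n)$ lies in $-\mathcal{L}^n_+$ precisely when $d_n\le 0$), so there is no substantive difference from the paper.
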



For further discussion we use the following notations:
\begin{itemize}
\item[(i)] $H_{+}:=\{x:x_n\geq 0\}$, the (closed) upper halfspace of $\mathbb{R}^n$.
\item[(ii)] $H_{-}:=\{x:x_n\leq 0\}$, the (closed) lower halfspace of $\mathbb{R}^n$.
\item[(iii)] $H_{0}:=\{x:x_n= 0\}$, the hyperplane in $\mathbb{R}^n$ that separates $H_{+}$ and $H_{-}$.
\item[(iv)] $\mathbb{S}^{n}=\{x:\norm{x}=1\}$, the unit circle in $\mathbb{R}^{n}$.
\item[(v)] $L(x,y)=\{tx+(1-t)y:~~0\leq t\leq 1\}$, the line joining the points $x$ and $y$ in $\mathbb{R}^n$.
\end{itemize}

Note that the hyperplane $H_0$ intersects $\mathcal{L}^n_+$ only in the origin and  angle between any two vectors of $\mathcal{L}^n_+$ is less than or equal to $\pi/2$. We next furnish a necessary and sufficient condition for orthogonal $\mathcal{L}^n_+$-semipositive matrix. To do so, we require the following two lemmas. 
\begin{lemma}\label{angle1}
Any two non-zero vectors, respectively, from the sets $H_0$, and $\partial \mathcal{L}^n_+$ creates a least angle of  $\pi/4$.
\end{lemma}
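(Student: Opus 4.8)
The plan is to compute the angle between an arbitrary nonzero $x\in H_0$ and an arbitrary nonzero $y\in\partial\mathcal{L}^n_+$ directly from the definitions, and to show the cosine of that angle is at most $\tfrac{1}{\sqrt 2}$, with the bound attained. Since $x\in H_0$ we have $x_n=0$, so $x=(x_1,\dots,x_{n-1},0)$ with $\sum_{i=1}^{n-1}x_i^2=\norm{x}^2$. Since $y\in\partial\mathcal{L}^n_+$ we have $y_n\ge 0$ and $\sum_{i=1}^{n-1}y_i^2=y_n^2$, hence $\norm{y}^2=2y_n^2$, i.e. $y_n=\norm{y}/\sqrt 2$. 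The first step is to write
\[
\langle x,y\rangle=\sum_{i=1}^{n-1}x_iy_i,
\]
since the $n$-th terms contribute nothing as $x_n=0$.

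Next I would bound this inner product. By Cauchy--Schwarz applied to the truncated vectors $(x_1,\dots,x_{n-1})$ and $(y_1,\dots,y_{n-1})$,
\[
\Big|\sum_{i=1}^{n-1}x_iy_i\Big|\le\Big(\sum_{i=1}^{n-1}x_i^2\Big)^{1/2}\Big(\sum_{i=1}^{n-1}y_i^2\Big)^{1/2}=\norm{x}\cdot|y_n|=\norm{x}\,\frac{\norm{y}}{\sqrt 2}.
\]
Therefore
\[
\cos\theta=\frac{\langle x,y\rangle}{\norm{x}\,\norm{y}}\le\frac{1}{\sqrt 2},
\]
which gives $\theta\ge\pi/4$. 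This is essentially the whole argument; the computation is short because the defining relation of $\partial\mathcal{L}^n_+$ pins down $y_n$ in terms of $\norm{y}$ exactly.

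To complete the statement I should exhibit equality, showing the bound $\pi/4$ is sharp (the phrase ``creates a least angle of $\pi/4$'' asks for this). Take $y\in\partial\mathcal{L}^n_+$ arbitrary with, say, $y=(y_1,\dots,y_{n-1},y_n)$, and set $x=(y_1,\dots,y_{n-1},0)\in H_0$; then $\langle x,y\rangle=\sum_{i=1}^{n-1}y_i^2=y_n^2$ and $\norm{x}=|y_n|$, $\norm{y}=\sqrt 2\,|y_n|$, so $\cos\theta=1/\sqrt2$ and $\theta=\pi/4$ exactly. I do not anticipate a genuine obstacle here; the only point needing a little care is handling the degenerate case $y_n=0$, but then $y\in\partial\mathcal{L}^n_+$ forces $\sum_{i=1}^{n-1}y_i^2=0$ so $y=0$, contradicting $y\neq 0$, so this case does not occur and every nonzero $y\in\partial\mathcal{L}^n_+$ has $y_n>0$. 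Hence the inequality $\theta\ge\pi/4$ holds for all admissible pairs, with equality realized, proving the lemma.
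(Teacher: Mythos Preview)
Your proof is correct. The paper's own argument reaches the same conclusion by a slightly more roundabout route: it normalizes both vectors to unit length, sets up the constrained maximization
\[
\max\ a_1z_1+\cdots+a_{n-1}z_{n-1}\quad\text{subject to}\quad \sum_{i=1}^{n-1}z_i^2=\tfrac12,\ \sum_{i=1}^{n-1}a_i^2=1,
\]
and then invokes Lagrange multipliers to obtain the maximum value $1/\sqrt{2}$. Your approach replaces that optimization machinery with a one-line application of Cauchy--Schwarz on the truncated $(n-1)$-vectors, which is more elementary and entirely self-contained; it also makes the equality case explicit, whereas the paper leaves the attainment implicit in the solution of the optimization problem. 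The two arguments are equivalent in content---Lagrange multipliers here would simply rediscover the Cauchy--Schwarz extremum---but yours is the cleaner presentation.
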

\begin{proof}
It is equivalent to find the minimum value of $\arccos \langle a, z \rangle , a\in H_0, z\in \partial  \mathcal{L}^n_+$. We assume that both $a,z$ are of unit norm for simplicity. Therefore, the solution of the desired minimization problem is same as the following maximization problem, which is described below:
\begin{center}
 \begin{equation*}
      \begin{array}{l}
       \max~~ \langle a,z \rangle,~~\text{ subject to } \\
       \left\{\begin{array}{l}
    z\in \partial \mathcal{L}^n_+\cap \mathbb{S}^n \\
      a\in H_0 \cap \mathbb{S}^n 
      \end{array} \right.
          \end{array}  
  \end{equation*}  
  \end{center}

Since $z\in \partial \mathcal{L}^n_+\cap \mathbb{S}^n $, so $z_n=\frac{1}{\sqrt{2}}$ and also $a\in H_0$ implies $a_n=0$. Thus the above maximization problem can be re-written in a simpler form given by
 \begin{equation}\label{lem2-eq1} 
      \begin{array}{ll}
      \max ~~a_1z_1+\ldots+a_{n-1}z_{n-1},~~\text{ subject to } \\
        \left\{   \begin{array}{l}z_1^2+\ldots+z_{n-1}^2=\frac{1}{2} \\
        a_1^2\ldots+a_{n-1}^2=1  \end{array} \right.
          \end{array} 
  \end{equation}  
Solving the maximization problem~\eqref{lem2-eq1} by Lagrange multiplier method,  the solution is obtained as $\dfrac{1}{\sqrt{2}}$. This show that the minimum value of $\arccos\langle a,z \rangle, a\in H_0,  z\in \partial \mathcal{L}^n_+$ is $\pi/4$. 
\end{proof}

In the previous lemma it can be shown that, $\arccos\langle a,z \rangle \leq \pi/4, ~\forall z\in \mathcal{L}^n_+$.

\begin{lemma}\label{angle2}
For any $x=(x_1,\ldots,x_n)\in \mathbb{S}^n$, there exists a vector $z \in \Int(\mathcal{L}^n_+)$ with $\arccos\langle x,z \rangle <\pi /4$ if and only if $x_n>0$.
\end{lemma}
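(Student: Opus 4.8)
Since both $\mathcal{L}^n_+$ and the quantity $\arccos\langle x,z\rangle$ are unaffected by replacing $z$ with a positive multiple of itself, the first step is to normalise $z$ and take $z\in\mathbb{S}^n$; then, with $x\in\mathbb{S}^n$ too, the condition $\arccos\langle x,z\rangle<\pi/4$ says precisely that the angle between $x$ and $z$ is less than $\pi/4$. The key elementary fact I would record before anything else is that, for a unit vector $w$, one has $w\in\mathcal{L}^n_+\iff w_n\geq\tfrac1{\sqrt2}\iff\langle w,e_n\rangle\geq\tfrac1{\sqrt2}$ and $w\in\Int(\mathcal{L}^n_+)\iff w_n>\tfrac1{\sqrt2}$; that is, $\mathcal{L}^n_+\cap\mathbb{S}^n$ is the closed spherical cap of angular radius $\pi/4$ about $e_n$, with interior the corresponding open cap. (This also explains the $\pi/4$ thresholds appearing in Lemma~\ref{angle1}.)

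For the ``only if'' direction I would argue: if such a $z$ exists (taken of unit norm), then $z\in\Int(\mathcal{L}^n_+)$ forces $\langle z,e_n\rangle>\tfrac1{\sqrt2}$, so the angle between $z$ and $e_n$ is less than $\pi/4$; combined with the angle between $x$ and $z$ being less than $\pi/4$, the triangle inequality for the geodesic distance on $\mathbb{S}^n$ gives that the angle between $x$ and $e_n$ is less than $\pi/2$, whence $x_n=\langle x,e_n\rangle>0$. If one prefers to avoid invoking the spherical triangle inequality, the same conclusion drops out of a single Cauchy--Schwarz estimate: writing $x=x_ne_n+x'$ and $z=z_ne_n+z'$ with $x',z'$ having vanishing last coordinate, the hypotheses give $\sqrt{1-x_n^2}\,\sqrt{1-z_n^2}=\norm{x'}\norm{z'}\geq\langle x',z'\rangle>\tfrac1{\sqrt2}-x_nz_n$, while $z_n>\tfrac1{\sqrt2}$ makes the left-hand side strictly less than $\tfrac1{\sqrt2}$ and $x_n\leq0$ would make the right-hand side at least $\tfrac1{\sqrt2}$ --- a contradiction.

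For the ``if'' direction I would simply exhibit a witness: assuming $x_n>0$, set $z:=x+e_n$ (nonzero, since $z_n=x_n+1>0$). Then $\norm{z}^2=2+2x_n$, so $2z_n^2-\norm{z}^2=2(x_n+1)^2-(2+2x_n)=2x_n(x_n+1)>0$, which places $z$ in $\Int(\mathcal{L}^n_+)$; and $\langle x,z\rangle=1+x_n>0$ with $\langle x,z\rangle^2/(\norm{x}^2\norm{z}^2)=(1+x_n)^2/\bigl(2(1+x_n)\bigr)=(1+x_n)/2>\tfrac12$, so the angle between $x$ and $z$ is less than $\pi/4$. Notice that both of the needed inequalities are, after simplification, \emph{equivalent} to $x_n>0$, so this single choice of $z$ closes the equivalence.

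The only place with any real content is the sufficiency direction, and the apparent obstacle is this: when $x_n$ is small and positive, $x$ is nearly orthogonal to the axis $e_n$ and hence almost $\pi/2$ away (in angle) from the whole cone, so one might expect to need a careful case analysis to squeeze a $z$ into the open cone while keeping it within $\pi/4$ of $x$. The resolution is the choice $z=x+e_n$, which up to normalisation is exactly the midpoint of the minimising geodesic from $x$ to $e_n$ on $\mathbb{S}^n$: it bisects the angular gap, so the ``$z$ lies in the open cone'' inequality and the ``$z$ lies within $\pi/4$ of $x$'' inequality both collapse to $x_n>0$, with no case distinction required. A cleaner-looking but less explicit alternative would be to pass to the $2$-plane $\Span\{x,e_n\}$, where $\mathcal{L}^n_+$ restricts to a planar sector of half-angle $\pi/4$ about $e_n$ and the bisecting ray of $x$ and $e_n$ again does the job.
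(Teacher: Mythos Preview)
Your proof is correct. For the ``if'' direction you and the paper do the same thing: the paper's witness $z=\tfrac12(x+e_n)$ is your $z=x+e_n$ up to the harmless positive scalar you already noted, and both verifications reduce the two required inequalities to $x_n>0$. The genuine difference is in the ``only if'' direction. The paper argues geometrically via Lemma~\ref{angle1}: assuming $x_n\le 0$, it picks the intersection points $p\in\partial\mathcal{L}^n_+\cap L(x,z)$ and $q\in H_0\cap L(x,z)$ on the segment from $x$ to $z$ and invokes Lemma~\ref{angle1} to force the angle between $p$ and $q$ (hence between $x$ and $z$) to be at least $\pi/4$. Your route is more self-contained: you recast $\Int(\mathcal{L}^n_+)\cap\mathbb{S}^n$ as the open spherical cap of angular radius $\pi/4$ about $e_n$ and then either apply the spherical triangle inequality or do a one-line Cauchy--Schwarz estimate. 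What this buys you is that Lemma~\ref{angle1} is not needed at all, and the argument is purely inequality-based; what the paper's approach buys is a picture that makes the role of Lemma~\ref{angle1} transparent and ties the two lemmas together, at the cost of a slightly heavier setup (locating $p$ and $q$ on the segment). Either way the content is the same elementary spherical-geometry fact.
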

\begin{proof}
Assume that $x_n>0$. We show that $z=\frac{1}{2}(x+e_n)$ is the desired vector. We first prove that $z\in \Int(\mathcal{L}^n_+)$. Note that $z_{n}=\frac{1}{2}(x_n+1)>\frac{1}{2}>0$. Now,
 \begin{align*}
\norm{z}^2 & =\dfrac{1}{4}\left[x_1^2+\ldots+x_{n-1}^2+(x_n+1)^2\right]\\
  & =\frac{1}{4}\left(\norm{x}^2+2x_n+1\right)\\
  &=\frac{1}{2}(1+x_n)=z_n<2z_{n}^2
 \end{align*}

Since $\langle x,z \rangle =\frac{1}{2}(1+x_n)=||z||^2$, and $x_n>0$, we have 
\[\cos \theta =\frac{\langle x,z \rangle}{||z||}=\frac{1}{\sqrt{2}}\sqrt{1+x_n}>\frac{1}{\sqrt{2}}\]
where $\theta=\arccos\langle x,z \rangle$.
  This shows that $\arccos \langle x,z \rangle<\pi/4$

Conversely, let there exist a $z\in \Int(\mathcal{L}^n_+)\cap \mathbb{S}^n$ with  $\arccos\langle x,z \rangle <\pi /4$. Then $\langle x,z \rangle >\dfrac{1}{\sqrt{2}}$. Suppose that $x_n\leq 0$. Then $x\in H_-$ and so we have $z\in \Int(\mathcal{L}^n_+)$ and $x\notin \mathcal{L}^n_+$. This implies there exists a vector $p\in \partial \mathcal{L}^n_+\cap L(x,z) $. Again $z\in \Int(H_+)$, and $x\notin \Int(H_+)$, imply that we can find another vector $q\in H_0 \cap L(x,z)$ (in case $x_n=0$, we take $q=x$).  Then by Lemma \ref{angle1}, $\arccos \langle p,q \rangle \geq \pi/4$ and hence $\arccos \langle x,z \rangle \geq \pi/4$, because $p,q\in L(x,z)$, which is a contradiction. 
 Hence $x_n>0$.
 \end{proof}

We now state  and prove the main result.

\begin{theorem}
 An orthogonal matrix $A=(a_{ij})$ is $\mathcal{L}^n_+$-semipositive if and only if $a_{nn}>0$.
\end{theorem}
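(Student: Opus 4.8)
The plan is to reformulate both conditions ``$x\in\Int(\mathcal{L}^n_+)$'' and ``$Ax\in\Int(\mathcal{L}^n_+)$'' as angle conditions and then read off the answer from Lemma~\ref{angle2}. The starting observation is that, since $\mathcal{L}^n_+=\{x:\norm{x}\le\sqrt 2\,x_n\}$, a unit vector $z$ lies in $\Int(\mathcal{L}^n_+)$ exactly when $\langle z,e_n\rangle>\tfrac{1}{\sqrt2}$, i.e. when it makes an angle strictly less than $\pi/4$ with $e_n$; in other words $\Int(\mathcal{L}^n_+)\cap\mathbb{S}^n=\{z\in\mathbb{S}^n:\arccos\langle z,e_n\rangle<\pi/4\}$. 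By the homogeneity of the cone, $A$ is $\mathcal{L}^n_+$-semipositive if and only if there is a \emph{unit} vector $x\in\Int(\mathcal{L}^n_+)$ with $Ax\in\Int(\mathcal{L}^n_+)$, so it suffices to search among unit semipositivity vectors.

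Next I would use orthogonality to push $A$ across the inner product. Since $A$ is orthogonal, $\norm{Ax}=\norm{x}=1$ and $\langle Ax,e_n\rangle=\langle x,A^Te_n\rangle$. Writing $w:=A^Te_n$, which is the $n$-th row of $A$ viewed as a column vector, $w$ is a unit vector and $w_n=a_{nn}$. Consequently, for a unit vector $x$, $Ax\in\Int(\mathcal{L}^n_+)$ is equivalent to $\langle x,w\rangle>\tfrac{1}{\sqrt2}$, that is, to $\arccos\langle x,w\rangle<\pi/4$. Combining this with the first paragraph, $A$ is $\mathcal{L}^n_+$-semipositive if and only if there exists $x\in\Int(\mathcal{L}^n_+)\cap\mathbb{S}^n$ with $\arccos\langle w,x\rangle<\pi/4$.

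Now Lemma~\ref{angle2}, applied with $w$ in the role of its ``$x$'', says precisely that such a vector exists if and only if $w_n>0$, i.e. if and only if $a_{nn}>0$; this finishes both directions at once. For the sufficiency direction one can also exhibit the semipositivity vector explicitly, taking $x$ to be the normalization of $\tfrac12(w+e_n)$ as in the proof of Lemma~\ref{angle2}, and verifying directly that $x\in\Int(\mathcal{L}^n_+)$ and that $(Ax)_n=\langle x,w\rangle>\tfrac{1}{\sqrt2}=\tfrac{1}{\sqrt2}\norm{Ax}$, so $Ax\in\Int(\mathcal{L}^n_+)$. I do not expect a genuine obstacle here: the whole argument is bookkeeping with angles and orthogonal invariance. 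The only points requiring a little care are the passage between the ``interior semipositivity vector'' form of the definition and the unit-vector reformulation, and keeping straight that it is the $n$-th \emph{row} of $A$ (equivalently $A^Te_n$), not its $n$-th column, whose last coordinate equals $a_{nn}$.
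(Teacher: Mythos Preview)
Your proposal is correct and follows essentially the same approach as the paper: both arguments normalize to unit semipositivity vectors using orthogonality, rewrite the condition $Ax\in\Int(\mathcal{L}^n_+)$ as $\langle a_n,x\rangle>\tfrac{1}{\sqrt2}$ (with $a_n$ the $n$-th row), and then invoke Lemma~\ref{angle2}. Your write-up is slightly more explicit about the reformulation via $w=A^Te_n$ and the two angle conditions, but the structure and the key lemma are identical.
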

\begin{proof}
Since $A$ is an orthogonal matrix, we may assume that any semipositivity vector $z$  of $A$ has $\norm{z}=1$ and must satisfy $z\in \Int(\mathcal{L}^n_+)$  and $1=\norm{Az}<\sqrt{2}(Az)_n$. 
 So finding a semipositivity vector of $A$ over $\mathcal{L}^n_+$ is equivalent to find an unit vector $z$ such that 
 \[\langle a_n,z \rangle >\frac{1}{\sqrt{2}},~~~\text{ where }a_{n}^T~\text{ is the }n\text{-th row of }A\]

If $\theta $ is the angle between $z$ and $a_n$, then  $\cos \theta >\dfrac{1}{\sqrt{2}}$ if and only if $\theta <\pi/4$.  Hence by Lemma \ref{angle2}, $z$ is a semipositivity vector of $A$ if and only if $a_{nn}>0$.
\end{proof}
%


We study our last class of $\mathcal{L}^n_+$-semipositivity matrices which are lower triangular, under consideration of the section.  For a lower triangular matrix $A$, $A(a_{nn}e_{n})=a_{nn}^2e_{n}$ and hence the following results are obvious.

\begin{proposition}\label{prop1}
Let $A=(a_{ij})\in \mathbb{R}^{n\times n  }$ be a lower triangular matrix with $a_{nn}> 0$, then $A $ is $\mathcal{L}^n_+$-Semipositive.
\end{proposition}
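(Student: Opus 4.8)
The plan is to reduce the statement to Proposition~\ref{Prop1}(ii), or equivalently to exhibit $e_n$ explicitly as a semipositivity vector. First I would record the structural fact about lower triangular matrices: if $A=(a_{ij})$ is lower triangular, then $a_{in}=0$ for every $i<n$, so the $n$-th column of $A$ is $\widetilde a_n = Ae_n = a_{nn}e_n$. Next I would check that $a_{nn}e_n$ lies in $\Int(\mathcal{L}^n_+)$ whenever $a_{nn}>0$: setting $y=a_{nn}e_n$ we have $y_n=a_{nn}>0$ and $\sum_{i=1}^{n-1}y_i^2 = 0 < a_{nn}^2 = y_n^2$, which is precisely the strict form of the defining inequalities of $\mathcal{L}^n_+$, so $y\in\Int(\mathcal{L}^n_+)$.

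With these two observations the conclusion follows in two interchangeable ways. Since $\widetilde a_n\in\Int(\mathcal{L}^n_+)$, Proposition~\ref{Prop1}(ii) applies verbatim and yields that $A$ is $\mathcal{L}^n_+$-semipositive with $e_n$ as a semipositivity vector. Alternatively, one may argue directly from the definition: $e_n\in\Int(\mathcal{L}^n_+)$ and $Ae_n = a_{nn}e_n\in\Int(\mathcal{L}^n_+)$, so $e_n$ is a semipositivity vector of $A$; the identity flagged before the statement, $A(a_{nn}e_n)=a_{nn}^2 e_n\in\Int(\mathcal{L}^n_+)$, gives the same conclusion with the rescaled vector $a_{nn}e_n$ in place of $e_n$.

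I do not expect any genuine obstacle: the result is essentially a one-line corollary of the column structure of a lower triangular matrix combined with the fact that positive scalar multiples of $e_n$ sit in the interior of the Lorentz cone. The only point deserving (minimal) care is to observe that the strict inequality $0<a_{nn}^2$ together with $a_{nn}>0$ places $a_{nn}e_n$ in the topological \emph{interior} of $\mathcal{L}^n_+$, not merely on its boundary $\partial\mathcal{L}^n_+$; this is exactly where the hypothesis $a_{nn}>0$ (rather than $a_{nn}\geq 0$) is used.
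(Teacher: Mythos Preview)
Your argument is correct and matches the paper's own reasoning: the paper simply notes before the statement that $A(a_{nn}e_n)=a_{nn}^2 e_n$ and declares the result obvious, which is precisely your direct verification (equivalently your appeal to Proposition~\ref{Prop1}(ii)) that $e_n$ (or $a_{nn}e_n$) is a semipositivity vector.
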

\begin{proposition}
If $A$ be a lower triangular matrix with $a_{nn}\neq 0$, then there exists $x\in \mathcal{L}^n_+\cup (-\mathcal{L}^n_+)$ such that $Ax \in \mathcal{L}^n_+$.
\end{proposition}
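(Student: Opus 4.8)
The plan is to exploit the defining structure of a lower triangular matrix, namely that its last column is $a_{nn}e_n$, so that $Ae_n = a_{nn}e_n$. This reduces the problem to a trivial case analysis on the sign of $a_{nn}$, and in fact one can arrange $Ax$ to lie in the interior of $\mathcal{L}^n_+$.

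First I would record the observation $Ae_n = a_{nn}e_n$: since $a_{in}=0$ for $i<n$, the $n$-th column of $A$ equals $(0,\dots,0,a_{nn})^T$, which is $a_{nn}e_n$. This is exactly the identity $A(a_{nn}e_n)=a_{nn}^2e_n$ already noted in the text just before Proposition~\ref{prop1}.

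Next I would split into two cases according to the sign of $a_{nn}$ (which is nonzero by hypothesis). If $a_{nn}>0$, take $x=e_n$; then $x\in\Int(\mathcal{L}^n_+)\subseteq\mathcal{L}^n_+$ and $Ax=a_{nn}e_n$ satisfies $(Ax)_n=a_{nn}>0$ and $\sum_{i=1}^{n-1}(Ax)_i^2=0\le a_{nn}^2=(Ax)_n^2$, so $Ax\in\mathcal{L}^n_+$. If $a_{nn}<0$, take $x=-e_n\in -\mathcal{L}^n_+$; then $Ax=-a_{nn}e_n$ with $-a_{nn}>0$, and the same computation gives $Ax\in\mathcal{L}^n_+$. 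In both cases $x\in\mathcal{L}^n_+\cup(-\mathcal{L}^n_+)$ and $Ax\in\mathcal{L}^n_+$, which proves the statement.

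There is essentially no obstacle here: the identity $Ae_n=a_{nn}e_n$ does all the work, and the only content beyond Proposition~\ref{prop1} is handling the case $a_{nn}<0$ by passing from $e_n$ to $-e_n$, which is permitted precisely because the conclusion allows the witness $x$ to lie in $-\mathcal{L}^n_+$ rather than $\mathcal{L}^n_+$.
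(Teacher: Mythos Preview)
Your proof is correct and follows the same idea as the paper, which simply declares the result obvious from the identity $A(a_{nn}e_n)=a_{nn}^2 e_n$ stated just before Proposition~\ref{prop1}. The only cosmetic difference is that the paper's single witness $x=a_{nn}e_n$ handles both signs of $a_{nn}$ at once, whereas you split into the two cases $x=e_n$ and $x=-e_n$; the content is identical.
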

%

We end this section by furnishing two sufficient conditions for semipositivity of lower triangular matrices over $\mathcal{L}^{n}_{+}$.
\begin{theorem}
Let $A=(a_{ij})=[a^T_1,\ldots,a^T_{n}]^T\in \mathbb{R}^{n\times n }$ be a non-zero lower triangular matrix such that $a_n\notin -\mathcal{L}^n_+$.  Let $\beta=-a_{nn},~~\gamma=\sum\limits_{i=1}^{n-1}\norm{a_i}^2$ and $\alpha_k =\sqrt{a_{1k}^2+\ldots+a_{(n-1)k}^2}$ for $k=1,\ldots,n$. Then we have the followings:
\begin{itemize}
\item [\rm(i)] If $\alpha_k+\beta <a_{nk} $, for $k=1,\ldots,n$,  then $A$ is $\mathcal{L}^n_+$-semipositive.
\item [\rm(ii)] If $\gamma+2\beta < \norm{a_n}$,
then $A$ is $\mathcal{L}^n_+$-semipositive.
\end{itemize}
\end{theorem}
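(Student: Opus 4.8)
The plan is to prove each part by exhibiting an explicit vector $x\in\mathcal{L}^n_+$ with $Ax\in\Int(\mathcal{L}^n_+)$, exactly in the spirit of the proofs of Theorem~\ref{thm1} and Theorem~\ref{thm4}: I would bound the spatial part $(Ax)_1^2+\dots+(Ax)_{n-1}^2$ from above, bound $(Ax)_n$ from below, and let the hypotheses force $\norm{Ax}^2<2(Ax)_n^2$. A secondary route, if the direct construction is awkward, is to argue by contradiction via Theorem~\ref{one_is_true} (using self-duality of $\mathcal{L}^n_+$), but the constructive route seems more natural here.

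For part (i) the first observation is that, since $A$ is lower triangular, its $n$-th column has $a_{in}=0$ for all $i<n$, so $\alpha_n=0$; hence the hypothesis $\alpha_k+\beta<a_{nk}$ read at $k=n$ says merely $-a_{nn}<a_{nn}$, i.e. $a_{nn}>0$, and Proposition~\ref{prop1} already yields the conclusion. To obtain a self-contained argument that genuinely uses the column inequalities for all $k$, I would test $x=(t,t,\dots,t,1)^T$ with $t>0$: it lies in $\mathcal{L}^n_+$ as soon as $(n-1)t^2\le 1$, and writing $A=[\widetilde a_1,\dots,\widetilde a_n]$ one has $Ax=t\sum_{k=1}^{n-1}\widetilde a_k+\widetilde a_n$, so that $(Ax)_n=t\sum_{k=1}^{n-1}a_{nk}+a_{nn}$ and, by the triangle inequality, $\big((Ax)_1^2+\dots+(Ax)_{n-1}^2\big)^{1/2}\le t\sum_{k=1}^{n-1}\alpha_k+\alpha_n=t\sum_{k=1}^{n-1}\alpha_k$. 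Since $a_{nn}>0$, the quantity $(Ax)_n$ is positive for all small $t$; and since each $\alpha_k-a_{nk}<a_{nn}$ by hypothesis, choosing $t$ small enough makes the spatial part strictly below $(Ax)_n$, whence $Ax\in\Int(\mathcal{L}^n_+)$.

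For part (ii) I would imitate the proof of Theorem~\ref{thm4} and test $y=\dfrac{a_n}{\norm{a_n}}+e_n$ (note $\norm{a_n}\ne 0$, since otherwise $a_n\in-\mathcal{L}^n_+$, against hypothesis). Then $(Ay)_n=\langle a_n,y\rangle=\norm{a_n}+a_{nn}=\norm{a_n}-\beta$, which is positive because $\gamma+2\beta<\norm{a_n}$ together with $\gamma\ge 0$ forces $2\beta<\norm{a_n}$; moreover $\sum_{i=1}^{n-1}y_i^2=1-a_{nn}^2/\norm{a_n}^2$, $y_n=1+a_{nn}/\norm{a_n}$, and $\norm{y}^2=2(\norm{a_n}-\beta)/\norm{a_n}$. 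By Cauchy--Schwarz, $\sum_{i=1}^{n-1}(Ay)_i^2=\sum_{i=1}^{n-1}\langle a_i,y\rangle^2\le\norm{y}^2\sum_{i=1}^{n-1}\norm{a_i}^2=\gamma\,\norm{y}^2$, so the target $\sum_{i=1}^{n-1}(Ay)_i^2<(Ay)_n^2$ will follow once $\gamma\,\norm{y}^2<(\norm{a_n}-\beta)^2$, i.e. once $2\gamma<\norm{a_n}(\norm{a_n}-\beta)$, an inequality to be extracted from $\gamma+2\beta<\norm{a_n}$ (using non-negativity of $\gamma$ and of $\norm{a_n}$); the same hypothesis is what makes $y\in\mathcal{L}^n_+$.

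The hard part will be the sign of $a_{nn}$ in part (ii). When $a_{nn}\ge 0$ the vector $a_n/\norm{a_n}+e_n$ has $y_n\ge 1$ and lies in $\mathcal{L}^n_+$ immediately; but when $a_{nn}<0$ its last coordinate drops below $1$ and it may leave $\mathcal{L}^n_+$, so one must instead use $y=a_n/\norm{a_n}+c\,e_n$ for a suitable $c\ge 1$ chosen from $\beta/\norm{a_n}$, and then check \emph{simultaneously} that $y\in\mathcal{L}^n_+$, that $(Ay)_n=\norm{a_n}-c\beta>0$, and that the Cauchy--Schwarz bound still beats $(Ay)_n^2$. Keeping these three requirements compatible while using only the single scalar inequality $\gamma+2\beta<\norm{a_n}$ is the delicate computational point; the assumption $a_n\notin-\mathcal{L}^n_+$ (the necessary condition of Theorem~\ref{thm3}) is precisely what guarantees there is room to do so. Part (i), by contrast, reduces to Proposition~\ref{prop1} the moment the $k=n$ inequality is unwound, so there the only work is deciding how much of the per-column hypothesis one wants to display in an independent proof.
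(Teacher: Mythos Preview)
For part (i) your reduction is sharper than the paper's. You correctly note that the $k=n$ instance of the hypothesis reads $\alpha_n+\beta<a_{nn}$ with $\alpha_n=0$, forcing $a_{nn}>0$, whence Proposition~\ref{prop1} finishes immediately. The paper instead assumes $a_{nn}\le 0$ and, fixing a single index $k$, tests $x=c\,e_k+e_n$ with $c\in(0,1)$ chosen so that $\alpha_k+\beta<c\,a_{nk}$; this construction is correct but, as your observation shows, the case split is vacuous and the work unnecessary.

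For part (ii) the paper's test vector is $y=\dfrac{1}{3\norm{a_n}}a_n+\dfrac{2}{3}e_n$, which after rescaling is exactly your $a_n/\norm{a_n}+c\,e_n$ with $c=2$; this specific value gives $y_n>\tfrac{1}{3}$ and $\sum_{i<n}y_i^2<\tfrac{1}{9}$ regardless of the sign of $a_{nn}$, so the ``hard part'' you flagged disappears. More importantly, the paper does \emph{not} use your generic Cauchy--Schwarz bound $\sum_{i<n}(Ay)_i^2\le\gamma\norm{y}^2$: it exploits lower triangularity. Since $a_{in}=0$ for $i<n$, the $e_n$ component of $y$ contributes nothing to $(Ay)_i$ when $i<n$, so in fact $(Ay)_i=\dfrac{1}{3\norm{a_n}}\langle a_i,a_n\rangle$ exactly, and Cauchy--Schwarz against $a_n$ (not $y$) yields $\sum_{i<n}(Ay)_i^2\le\gamma/9$ with no $\norm{y}^2$ factor. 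Your route leads to the target $2\gamma<\norm{a_n}(\norm{a_n}-\beta)$, which you left ``to be extracted'' from $\gamma+2\beta<\norm{a_n}$; but that extraction fails when $\norm{a_n}$ is small, so this is a genuine gap, and the lower-triangular shortcut is the missing idea rather than a cosmetic choice.
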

\begin{proof} If $a_{nn}>0$, then the conclusion is obvious from Proposition~\ref{Prop1}. So, we assume that $a_{nn}\leq 0$.\\[2mm]
(i)  Let $\alpha_k+\beta <a_{nk}$.  We can always find a real number $c\in(0,1)$ such that 
 $\alpha_k+\beta <c a_{nk}$. Define $x\in\mathbb{R}^n$ by
 \[x_i=\left\{\begin{array}{ll}
 c &\text{ if }i=k\\
 1 &\text{ if }i=n\\
 0 &\text{ otherwise }\\
\end{array}  \right.\]
Then $x\in \Int\left(\mathcal{L}^n_+\right)$. We show that $x$ is the required semipositivity vector of $A$ over $\mathcal{L}^n_+$. Observe that $(Ax)_n=a_{nk}x_k+a_{nn}=a_{nk}c-\beta>\alpha_{k} \geq 0$. Furthermore, 
$$(Ax)_i=\langle a_i,x\rangle=\left\{
      \begin{array}{ll}
       0  &\text{if}~~ 1\leq i<k \\
        a_{ik}c &\text{if} ~~k\leq i \leq n-1 \\
        a_{nk}c+a_{nn}   &\text{if} ~~ i=n
          \end{array}
      \right.$$

      Thus we have
\[\sum_{i=1}^{n-1}(Ax)_i^2=\sum_{i=k}^{n-1}a_{ik}^2c^2=c^2\alpha_k^2<\alpha_k^2<(a_{nk}c-\beta)^2=(Ax)_n^2\]
Hence $A$ is $\mathcal{L}^{n}_{+}$-semipositive.\\

\noindent
(ii) Assume that $\gamma+2\beta \leq \norm{a_n}$. We prove that  $y=\dfrac{1}{3\norm{a_n}}a_n+\dfrac{2}{3}e_n$ is a semipositivity vector of $A$.

We first show that $y\in \mathcal{L}^n_+$. Notice that 
$y_n=\dfrac{1}{3}\left(\dfrac{a_{nn}}{\norm{a_n}}+2\right)>\dfrac{1}{3}>0$. Again,
\begin{align*}
 \sum_{i=1}^{n-1} y_i^2 & =\frac{1}{9{\norm{a_n}^2}}\sum\limits_{i=1}^{n-1}a_{ni}^2 \\
  & =\frac{1}{9}\left(1-\frac{a_{nn}^2}{\norm{a_n}^2}\right)<\frac{1}{9} <y_n^2
 \end{align*}
It remains to show that $Ay\in \Int(\mathcal{L}^n_+)$. Then by the hypothesis we have,
\[(Ay)_n=\left\langle a_n, \frac{1}{3\norm{a_n}}a_n+\frac{2}{3}e_n\right\rangle=\frac{1}{3}\norm{a_nn}+\frac{2}{3}a_{n}=\dfrac{1}{3}(\norm{a_{n}}-2\beta)> \dfrac{\gamma}{3}\geq 0 \]
and 
\begin{align*}
  \sum_{i=1}^{n-1}(Ay)_i^2 & =\dfrac{1}{9\norm{a_n}^2}\left(\sum_{i=1}^{n-1}\langle a_i,a_n \rangle ^2 \right)\\
  & \leq \frac{1}{9}\sum_{i=1}^{n-1}\norm{a_i}^2\\
  &<\frac{1}{9}(\norm{a_n}-2\beta)^2 =(Ay)_n^2
 \end{align*}
 This shows that $A$ is $\mathcal{L}^{n}_{+}$-semipositive.
\end{proof}

\section{Geometric properties of $\mathcal{L}^n_+$-semipositive cones}\label{sec4}
For a given proper cone $K$ and matrix $A$, the  $K$-{\it semipositive cone} is defined as $\mathcal{K}_{A,K}=\{x\in K:Ax\in K\}$. Recently many researchers found fascinating results on  geometric properties of  the semipositive cone $K_{A}=\mathcal{K}_{A,\mathbb{R}^{n}_{+}}$. In particular, in \cite{Tsa15}, it is proved that for a square matrix $A$, the cone $K_A$ is a proper polyhedral cone, and later the result was extended for non-square matrices in \cite{TsaS17}. Motivated by their results, the cone $S_A=\{x: Ax\geq 0\}$ was studied in ~\cite{HiaS20}. It is shown that for an invertible matrix $A$, $S_A$ is a simplicial cone, and properties of extremals of $S_A$ were also discussed in ~\cite{HiaS20}.  This drives us to generalize the cone $S_{A}$  to $\mathcal{S}_{A,K}=\{x:Ax\in K\}=A^{-1}(K)$, the pre-image of $K$ under $A$ and, to investigate the properties of extremals of $\mathcal{S}_{A,K}$. It is worth mentioning that $A$ is $K$-semipositive if and only if the intersection of $\mathcal{S}_{A,K}$ and $K$ has a non-empty interior. Also, it may be noted that for any $K$-monotone matrix $A$, the cones  $\mathcal{S}_{A,K}$ and $\mathcal{K}_{A,K}$ are identical. Since  the Lorentz cone is our particular interest of study in this paper,  we emphasis on geometrical properties of  $\mathcal{S}_{A,\mathcal{L}_{+}^{n}}$.  Following definitions are essential for our further study.

%
%
%

\begin{definition}{\rm\cite{BerP94,HiaS20}}\rm \emph{The set generated by $T\subseteq\mathbb{R}^n$}  is the collection of all finite non-negative linear combinations of elements of $T$, and is denoted by $T^{G}$.
A cone $K$ is called \emph{polyhedral cone} if $K=T^{G}$, for some finite set $T$, and a polyhedral cone in $\mathbb{R}^n$ is called \emph{simplicial}, if $T$ contains exactly $n$ number of extremals.  
\end{definition}
One must observe that the generators of polyhedral and simplicial cones are the extremals of the cones~\cite{BerP94}. We now state the theorem related to $S_{A}$  due to \cite{HiaS20} that is generalized in this section.

\begin{theorem}{\rm\cite{HiaS20}}\label{thm4.1} For an invertible matrix $A\in\mathbb{R}^{n\times n}$ the extremals of the simplicial cone $S_A$ have the form $\alpha_{i}A^{-1}e_{i}$ for some $\alpha_{i}>0,~~i=1,\ldots,n$.
\end{theorem}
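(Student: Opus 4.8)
The plan is to exploit the fact that $\mathcal{S}_{A,\mathbb{R}^n_+} = S_A = \{x : Ax \geq 0\} = A^{-1}(\mathbb{R}^n_+)$ when $A$ is invertible, so that $S_A$ is the image of the nonnegative orthant under the invertible linear map $A^{-1}$. Since $\mathbb{R}^n_+$ is a simplicial cone whose extremals are precisely the coordinate vectors $e_1,\dots,e_n$ (more precisely, the rays $\{\alpha e_i : \alpha \geq 0\}$), and since an invertible linear map carries a proper cone to a proper cone and carries extremal rays to extremal rays, it should follow immediately that $S_A$ is simplicial with extremal rays $A^{-1}e_1,\dots,A^{-1}e_n$, i.e.\ the extremals have the stated form $\alpha_i A^{-1}e_i$ with $\alpha_i > 0$.

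First I would verify that $S_A$ is a proper cone: it is closed (preimage of the closed set $\mathbb{R}^n_+$ under the continuous map $x \mapsto Ax$), it is a cone (if $Ax \geq 0$ then $A(\lambda x) \geq 0$ for $\lambda \geq 0$, and $A(x+y) = Ax + Ay \geq 0$), it is pointed (if $x \in S_A \cap (-S_A)$ then $Ax \geq 0$ and $-Ax \geq 0$, so $Ax = 0$, so $x = 0$ by invertibility), and it is solid (it contains $A^{-1}(\Int \mathbb{R}^n_+)$, a nonempty open set since $A^{-1}$ is a homeomorphism). Next I would show $S_A = \{A^{-1}e_1, \dots, A^{-1}e_n\}^G$: indeed $y \in S_A$ iff $Ay = \sum_i c_i e_i$ for some $c_i \geq 0$, iff $y = \sum_i c_i A^{-1}e_i$, which exhibits $S_A$ as generated by these $n$ vectors. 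Hence $S_A$ is polyhedral with at most $n$ generators.

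The key remaining step is to confirm that each $A^{-1}e_i$ is genuinely an extremal of $S_A$ (so the cone is simplicial with exactly $n$ extremals, not fewer) and that every extremal is of the form $\alpha_i A^{-1}e_i$. For this I would use that a linear isomorphism $\phi$ preserves the partial order structure up to the cone: $0 \overset{S_A}{\leq} z \overset{S_A}{\leq} A^{-1}e_i$ means $Az \geq 0$ and $e_i - Az \geq 0$, i.e.\ $0 \overset{\mathbb{R}^n_+}{\leq} Az \overset{\mathbb{R}^n_+}{\leq} e_i$; since $e_i$ is an extremal of $\mathbb{R}^n_+$, this forces $Az = \alpha e_i$ for some $\alpha \geq 0$, hence $z = \alpha A^{-1}e_i$, showing $A^{-1}e_i$ is extremal. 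The same computation run in reverse shows that if $w$ is an extremal of $S_A$, then $Aw$ is an extremal of $\mathbb{R}^n_+$, hence $Aw = \alpha_i e_i$ for some $i$ and $\alpha_i > 0$ (with $\alpha_i \neq 0$ since $w \neq 0$), so $w = \alpha_i A^{-1}e_i$.

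I do not anticipate a serious obstacle here; the only point requiring a little care is the general fact that an invertible linear map between proper cones induces a bijection between their sets of extremal rays, which follows directly from the definition of extremal by transporting the inequalities $0 \overset{K}{\leq} y \overset{K}{\leq} x$ across the isomorphism as above. Once that is in place, the theorem is essentially just the observation that $\mathbb{R}^n_+$ is simplicial with the standard basis vectors as extremals.
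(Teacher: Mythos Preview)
Your proposal is correct. Note that the paper does not itself prove Theorem~\ref{thm4.1}; it is quoted from \cite{HiaS20}. However, the paper immediately proves the generalization that for any proper cone $K$ and invertible $A$, a vector $x$ is an extremal of $K$ if and only if $A^{-1}x$ is an extremal of $\mathcal{S}_{A,K}$, and the argument there is precisely your order-transport step: pass from $0\overset{\mathcal{S}_{A,K}}{\leq}y'\overset{\mathcal{S}_{A,K}}{\leq}A^{-1}x$ to $0\overset{K}{\leq}Ay'\overset{K}{\leq}x$ and back. Your additional verifications (properness of $S_A$, the explicit generator description $S_A=\{A^{-1}e_1,\dots,A^{-1}e_n\}^G$) are not carried out in the paper but are routine and correct, so your write-up is in fact slightly more self-contained than what the paper provides.
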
 

Since $e_{i}$'s are the extremals of the simplicial cone $\mathbb{R}^{n}_+$, Theorem~\ref{thm4.1} gives an idea to relate the extremals of $K$ and of $\mathcal{S}_{A,K}$, which is given in the next theorem.
\begin{theorem}
Let $K$ be a proper cone, and  $A\subseteq \mathbb{R}^n$ be an invertible matrix. Then $x$ is an extremal of $K$ if and only if $A^{-1}x$ is an extremal of $\mathcal{S}_{A,K}$.
\end{theorem}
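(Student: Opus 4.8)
The plan is to exploit the linearity of $A$ together with the definition $\mathcal{S}_{A,K}=A^{-1}(K)$, and to transfer the order-theoretic characterization of extremality through the linear isomorphism $x\mapsto A^{-1}x$. The key observation is that for an invertible $A$, the map $\varphi(x)=A^{-1}x$ is a bijection from $K$ onto $\mathcal{S}_{A,K}$: indeed $x\in K$ iff $A(A^{-1}x)=x\in K$ iff $A^{-1}x\in\mathcal{S}_{A,K}$. Moreover this map respects the relevant partial orders in the sense that $0\overset{K}{\leq}y\overset{K}{\leq}x$ holds in $K$ if and only if $0\overset{\mathcal{S}_{A,K}}{\leq}A^{-1}y\overset{\mathcal{S}_{A,K}}{\leq}A^{-1}x$ holds in $\mathcal{S}_{A,K}$; this is just the statement that $y\in K$, $x-y\in K$ are equivalent to $A^{-1}y\in\mathcal{S}_{A,K}$, $A^{-1}(x-y)=A^{-1}x-A^{-1}y\in\mathcal{S}_{A,K}$, which follows from the bijection above applied to $y$ and to $x-y$.

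First I would verify that $\mathcal{S}_{A,K}$ is a closed convex cone, so that the notion of extremal from the preamble applies: closedness follows since $A^{-1}$ is continuous and $K$ is closed, and the cone/convexity properties are immediate from linearity of $A$ and the corresponding properties of $K$. Next I would prove the forward direction: assume $x$ is an extremal of $K$, and suppose $0\overset{\mathcal{S}_{A,K}}{\leq}w\overset{\mathcal{S}_{A,K}}{\leq}A^{-1}x$; writing $w=A^{-1}y$ with $y=Aw\in K$ (legitimate since $w\in\mathcal{S}_{A,K}$), the order chain translates, by the equivalence noted above, to $0\overset{K}{\leq}y\overset{K}{\leq}x$, so extremality of $x$ gives $y=\alpha x$ for some $\alpha\geq 0$, whence $w=A^{-1}y=\alpha A^{-1}x$. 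Thus $A^{-1}x$ is an extremal of $\mathcal{S}_{A,K}$. The converse is symmetric: apply the same argument with the roles of $K$ and $\mathcal{S}_{A,K}$ interchanged and $A$ replaced by $A^{-1}$, noting that $(A^{-1})^{-1}(\mathcal{S}_{A,K}) = A(A^{-1}(K)) = K$, so that $\mathcal{S}_{A^{-1},\mathcal{S}_{A,K}} = K$ and the statement becomes self-dual under $A\leftrightarrow A^{-1}$.

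I do not anticipate a serious obstacle here; the result is essentially the observation that extremality is an affine/linear-isomorphism invariant of a cone. The one point requiring a little care is making the order-equivalence precise — i.e. checking that "$y\in K$ and $x-y\in K$" is genuinely equivalent to the pair of membership statements in $\mathcal{S}_{A,K}$ after applying $A^{-1}$ — but this is a one-line consequence of the linearity identity $A^{-1}(x-y)=A^{-1}x-A^{-1}y$ and the bijection $A^{-1}:K\to\mathcal{S}_{A,K}$. A secondary minor point is to confirm that the scalar $\alpha\geq 0$ produced on one side transfers unchanged to the other, which it does since $A^{-1}(\alpha x)=\alpha A^{-1}x$.
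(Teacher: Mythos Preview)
Your proposal is correct and follows essentially the same route as the paper: both proofs transfer the order relation $0\overset{K}{\leq}y\overset{K}{\leq}x$ through the linear bijection $A^{-1}:K\to\mathcal{S}_{A,K}$, using $A^{-1}(x-y)=A^{-1}x-A^{-1}y$ to pass between the two partial orders. The only cosmetic difference is that the paper handles the converse by repeating the computation directly, whereas you invoke the self-duality $\mathcal{S}_{A^{-1},\mathcal{S}_{A,K}}=K$; these amount to the same argument.
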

\begin{proof} Assume that $x$ is an extremal of $K$. Let $0\overset{\mathcal{S}_{A,K}}{\leq}y^{'}\overset{\mathcal{S}_{A,K}}{\leq}A^{-1}x$. Write $x^{'}=A^{-1}x$. Since $y^{'}\in \mathcal{S}_{A,K}=A^{-1}K$, choose  $y\in K$, such that $y^{'}=A^{-1}y$, and hence $x^{'}-y^{'}=A^{-1}(x-y)\in A^{-1}K$, that is $x-y\in K$, so that $0\overset{K}{\leq}y\overset{K}{\leq}x$. As $x$ is an extremal of $K$, $x=\alpha y$, for some $\alpha \geq 0$. Therefore, $x^{'}=\alpha y^{'}$. 

Conversely, assume that $x^{'}=A^{-1}x$ is an extremal of $\mathcal{S}_{A,K}$. Let $0\overset{K}{\leq}y\overset{K}{\leq}x$, then $y^{'}=A^{-1}y\in \mathcal{S}_{A,K}$, and $x^{'}-y^{'}=A^{-1}(x-y)\in \mathcal{S}_{A,K}$. Therefore $0\overset{\mathcal{S}_{A,K}}{\leq}y^{'}\overset{\mathcal{S}_{A,K}}{\leq}x^{'}$, so that $x^{'}=\alpha y^{'}$, for some $\alpha \geq 0$. Thus $x=\alpha y$.
\end{proof}
In Theorem~3.6  and 3.10 of \cite{HiaS20}, it is proved that a matrix $A\in\mathbb{R}^{n\times n}$ is minimally semipositive if and only if $K_A=S_{A}$ and $K_A$ is a simplicial cone. In that case, extremals of $K_A$ are the columns of $A^{-1}$. Following two corollaries  generalize these two results for $K$-monotone matrices and follow from the simple fact that $\mathcal{K}_{A,K}=\mathcal{S}_{A,K}$, if $A$ is $K$-monotone.
\begin{corollary}
Let $K$ be a proper cone in $\mathbb{R}^n$, and  $A\in\mathbb{R}^{n\times n}$ is a $K$-monotone matrix. Then $x$ is an extremal of $K$ if and only if $A^{-1}x$ is an extremal of $\mathcal{K}_{A,K}$.
\end{corollary}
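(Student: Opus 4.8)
The plan is to derive the corollary directly from the theorem immediately preceding it, using the observation already recorded in the text that $\mathcal{K}_{A,K}=\mathcal{S}_{A,K}$ whenever $A$ is $K$-monotone. So the first step is to justify that set equality: if $A$ is $K$-monotone, then by definition $Ax\in K$ forces $x\in K$, hence $\mathcal{S}_{A,K}=\{x:Ax\in K\}\subseteq\{x\in K:Ax\in K\}=\mathcal{K}_{A,K}$; the reverse inclusion $\mathcal{K}_{A,K}\subseteq\mathcal{S}_{A,K}$ is trivial from the definitions. Also note that a $K$-monotone matrix is invertible by Theorem~\ref{monotone}, so the previous theorem applies.

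Once the two cones coincide, the corollary is an immediate transcription of the preceding theorem: $x$ is an extremal of $K$ iff $A^{-1}x$ is an extremal of $\mathcal{S}_{A,K}=\mathcal{K}_{A,K}$. So the second and final step is simply to invoke that theorem with the set equality substituted in. I would write the proof in two or three sentences: first cite Theorem~\ref{monotone} for invertibility, then establish $\mathcal{S}_{A,K}=\mathcal{K}_{A,K}$, then quote the previous theorem.

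There is essentially no obstacle here; the only thing to be careful about is making sure the hypothesis ``$A$ is $K$-monotone'' genuinely delivers both invertibility (needed so $A^{-1}$ makes sense and so the previous theorem is applicable) and the set identity. Both follow from results already in the excerpt, so the proof is a short deduction rather than a construction. A representative write-up:

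\begin{proof}
Since $A$ is $K$-monotone, Theorem~\ref{monotone} gives that $A$ is invertible. Moreover, $\mathcal{S}_{A,K}=\{x:Ax\in K\}$ and $\mathcal{K}_{A,K}=\{x\in K:Ax\in K\}$; the inclusion $\mathcal{K}_{A,K}\subseteq\mathcal{S}_{A,K}$ is clear, while if $x\in\mathcal{S}_{A,K}$ then $Ax\in K$, and $K$-monotonicity forces $x\in K$, so $x\in\mathcal{K}_{A,K}$. Hence $\mathcal{K}_{A,K}=\mathcal{S}_{A,K}$. The claim now follows at once from the previous theorem applied to the invertible matrix $A$: $x$ is an extremal of $K$ if and only if $A^{-1}x$ is an extremal of $\mathcal{S}_{A,K}=\mathcal{K}_{A,K}$.
\end{proof}
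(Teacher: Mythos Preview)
Your proposal is correct and follows exactly the approach the paper intends: the paper states that the corollary ``follow[s] from the simple fact that $\mathcal{K}_{A,K}=\mathcal{S}_{A,K}$, if $A$ is $K$-monotone,'' and does not give any further details. Your write-up simply fills in the justification for that set equality and the invertibility of $A$, then invokes the preceding theorem, which is precisely what the paper has in mind.
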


\begin{corollary} If $K$ is a simplicial cone in $\mathbb{R}^n$, then so is $\mathcal{S}_{A,K}$. 
\end{corollary}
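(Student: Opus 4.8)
The plan is to read this off from the theorem immediately preceding it (the extremal correspondence $x\mapsto A^{-1}x$ between $K$ and $\mathcal{S}_{A,K}$), together with the linearity and invertibility of $A$. Note first that $A$ must be assumed invertible here, as it is throughout this part of the section: otherwise $\mathcal{S}_{A,K}=A^{-1}(K)$ contains $\ker A\neq\{0\}$ (since $0\in K$) and is not pointed, so cannot be simplicial. Granting invertibility, I would begin by using that $K$ simplicial means there is a set $T=\{x_1,\dots,x_n\}$ of exactly $n$ extremals of $K$ with $K=T^{G}$, and that $x_1,\dots,x_n$ are linearly independent (an $n$-element generating set of extremals of a proper cone in $\mathbb{R}^n$ must span, hence be a basis).

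Next I would identify a generating set for $\mathcal{S}_{A,K}$. Since $A^{-1}$ is linear, it commutes with taking nonnegative combinations, so $A^{-1}(T^{G})=(A^{-1}T)^{G}$ where $A^{-1}T=\{A^{-1}x_1,\dots,A^{-1}x_n\}$; combined with $\mathcal{S}_{A,K}=A^{-1}(K)=A^{-1}(T^{G})$ this gives $\mathcal{S}_{A,K}=(A^{-1}T)^{G}$, so $\mathcal{S}_{A,K}$ is polyhedral with generating set $A^{-1}T$ of size $n$. Because $A^{-1}$ is invertible, the vectors $A^{-1}x_1,\dots,A^{-1}x_n$ are again linearly independent, so the generating set is minimal; moreover the preceding theorem says directly that each $A^{-1}x_i$ is an extremal of $\mathcal{S}_{A,K}$, so $A^{-1}T$ is precisely the set of extremals of $\mathcal{S}_{A,K}$, and there are exactly $n$ of them.

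Finally I would observe that $\mathcal{S}_{A,K}=A^{-1}(K)$ is a proper cone: as the image of the proper cone $K$ under the invertible linear map $A^{-1}$, it is convex, closed ($A^{-1}$ is a homeomorphism), pointed and solid — the same mechanism that underlies Theorem~\ref{ellip}. A proper polyhedral cone in $\mathbb{R}^n$ with exactly $n$ (necessarily linearly independent) extremals is by definition simplicial, which finishes the proof.

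I do not expect a genuine obstacle in this corollary; the only points needing a touch of care are confirming that $A^{-1}T$ is exactly the extremal set of $\mathcal{S}_{A,K}$ rather than merely a spanning set, and that $\mathcal{S}_{A,K}$ is itself proper — both of which are immediate from the invertibility of $A$ and the preceding extremal-correspondence theorem.
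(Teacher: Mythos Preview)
Your proposal is correct and is precisely the intended argument: the paper does not write out a proof for this corollary at all, merely indicating that it follows from the preceding extremal-correspondence theorem, and your write-up is the natural elaboration of that remark. Your added observations---that invertibility of $A$ is tacitly assumed, that $\mathcal{S}_{A,K}=A^{-1}(K)=(A^{-1}T)^G$ by linearity, and that $\mathcal{S}_{A,K}$ is proper as the image of a proper cone under a linear isomorphism---fill in exactly the details the paper leaves implicit.
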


For a given proper cone $K$,  next theorem furnishes a subclass of $K$-semipositive matrices. More specially, we show that all copositive matrices are $K$-semipositive, if $K$ is self dual.  For our purpose, we now define copositive matrix with respect to a proper cone $K$.
\begin{definition}{\rm \cite{BerP94}} Given a proper cone $K$, a symmetric matrix $A$ is called \emph{copositive over $K$} if $x^TAx\geq 0$ for all $x\in K$.
\end{definition}
If $K$ is self dual cone and $A$ is not semipositive, from results follows from Theorem \ref{one_is_true}.
\begin{proposition}\label{thn4.2}
If $K$ be any self-dual proper cone in $\mathbb{R}^n$, then  copositive  matrices over $K$ are $K$-semipositive.  In particular, symmetric positive definite matrices are $K$-semipositive, for any self dual cone $K$.
\end{proposition}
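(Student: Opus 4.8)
The plan is to prove the contrapositive using Theorem~\ref{one_is_true} (the Theorem of Alternatives). Suppose $A$ is copositive over the self-dual proper cone $K$ but, for contradiction, $A$ is not $K$-semipositive. Since $A=A^T$, Theorem~\ref{one_is_true} applied with $K_1=K_2=K$ tells us that there exists a nonzero vector $x\in -K^*$ with $A^Tx = Ax \in K^*$. Because $K$ is self-dual we have $K^*=K$, so this says $-x\in K$ and $Ax\in K$. The idea is then to feed the vector $-x$ (or $x$) into the copositivity inequality and the duality pairing to force $x=0$, contradicting $x\neq 0$.

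Concretely, set $z=-x\in K$, $z\neq 0$. Then $Az=-Ax$, and since $Ax\in K=K^*$ while $z\in K$, the definition of the dual cone gives $\langle Ax, z\rangle \ge 0$, i.e. $\langle Az, z\rangle = -\langle Ax,z\rangle \le 0$. On the other hand, copositivity of $A$ over $K$ gives $\langle Az,z\rangle = z^TAz \ge 0$. Hence $z^TAz=0$ and $\langle Ax,z\rangle=0$. At this stage one still needs to extract the contradiction $z=0$; this is the only delicate point. The cleanest route is to use the stronger alternative hypothesis already available in the proof of Theorem~\ref{thm2-int}: in fact a copositive matrix that fails to be $K$-semipositive can be shown to produce such an $x$ with $Ax\in K^*$ and, moreover, the self-dual structure forces $Ax$ and $z$ to be "orthogonal" elements of $K$, from which one derives a smaller-dimensional copositive-but-not-semipositive matrix and induces, or — in the positive definite case treated in the second sentence — one is done immediately, since $z^TAz>0$ for all $z\neq0$ contradicts $z^TAz=0$.

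Because the general copositive case requires this extra orthogonality/induction argument, I would actually organize the write-up in two layers. First I would record the easy special case fully: if $A$ is symmetric positive definite, then $z^TAz>0$ for every nonzero $z$, which is incompatible with the equality $z^TAz=0$ derived above; hence no such $z$ exists, the second alternative of Theorem~\ref{one_is_true} is impossible, and therefore $A$ is $K$-semipositive. Then for the general copositive statement I would appeal to the same alternative together with the observation that $\langle Az,z\rangle=0$ with $A$ copositive over $K$ means $z$ is a minimizer of the (nonnegative) quadratic form $x\mapsto x^TAx$ over $K$; standard first-order optimality at $z$ on the cone $K$ forces $Az\in -K^*=-K$, so $Az\in K\cap(-K)=\{0\}$ by pointedness, i.e. $Ax=-Az=0$. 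Then $x\ne 0$ lies in the kernel of $A$, but then $A^Tx=0\in\Int(K^*)$ is false — rather, one re-examines: $A$ singular is not yet a contradiction, so here one instead perturbs, replacing $A$ by $A+\varepsilon I$ (still copositive over $K$, and $\varepsilon I$ is positive definite hence $K$-semipositive), applies Theorem~\ref{thm2} type stability, and lets $\varepsilon\to 0$ using closedness of the relevant cone of matrices.

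The main obstacle, as the preceding paragraph signals, is precisely bridging from "$z^TAz=0$ for the alternative certificate $z$" to an outright contradiction when $A$ is merely copositive (not definite): the quadratic form can legitimately vanish on a face of $K$, so the naive argument stalls. I expect the resolution to be one of (a) a dimension-reduction/induction on the face of $K$ containing $z$, using that $K$ restricted to a face of a self-dual cone is again self-dual in its span, or (b) the perturbation argument $A\rightsquigarrow A+\varepsilon I$ combined with a limiting argument, which is technically lighter. The positive-definite sub-statement, by contrast, is immediate and serves as the clean corollary once the alternative-based framework is in place.
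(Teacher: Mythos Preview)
Your approach via Theorem~\ref{one_is_true} is exactly what the paper intends: its entire proof is the one-line remark preceding the proposition, to the effect that the result ``follows from Theorem~\ref{one_is_true}.'' For the positive-definite sub-statement your argument is complete and correct: the alternative produces a nonzero $z\in K$ with $z^TAz\le 0$, which contradicts definiteness.

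However, you are right to be uneasy about the general copositive claim, and in fact the gap you identify cannot be closed: the first sentence of the proposition is false as stated. Take $K=\mathbb{R}^2_+$ and
\[
A=\begin{pmatrix}1&-1\\-1&1\end{pmatrix}.
\]
Then $x^TAx=(x_1-x_2)^2\ge 0$ for all $x$, so $A$ is copositive over $K$, yet $Ax=(x_1-x_2,\,x_2-x_1)^T$ can never have both coordinates strictly positive, so $A$ is not semipositive. (Even more bluntly, the zero matrix is copositive over every $K$ but never $K$-semipositive.) Your perturbation route fails precisely because $S(K)$ is an \emph{open} set of matrices: each $A+\varepsilon I$ is indeed $K$-semipositive, but the limit need not be. The face-reduction route must likewise fail. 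The statement becomes true if one assumes \emph{strict} copositivity ($x^TAx>0$ for all nonzero $x\in K$), in which case your Theorem~\ref{one_is_true} argument goes through verbatim and positive definiteness is then the advertised special case.
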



Finally we discuss few interesting properties of $\mathcal{S}_{A,\mathcal{L}^n_+}$ and $\mathcal{K}_{A,\mathcal{L}^n_+}$. Since  $\mathcal{L}^n_+$ is an ellipsoidal cone, following theorem provides conditions if $\mathcal{S}_{A,\mathcal{L}^n_+}$ and $\mathcal{K}_{A,\mathcal{L}^n_+}$ are ellipsoidal cones. It is well-known from the literature that every ellipsoidal cone is of the form $B\mathcal{L}^n_+$, for some invertible matrix $B$ and vice versa. This fact has been used to describe ellipsoidality of the aforementioned cones.
\begin{theorem}
Let $A\in \mathbb{R}^{n\times n}$. Then we have,
\begin{itemize}
\item [\rm(i)] $\mathcal{S}_{A,\mathcal{L}^n_+}$ is an ellipsoidal cone if and only if $A$ is non-singular.
\item[\rm(ii)] Let $K$ be an ellipsoidal cone. Then there exists an invertible matrix $A$ such that $\mathcal{S}_{A,\mathcal{L}^n_+}=K$. 
\item [\rm(iii)] If $A$ is $\mathcal{L}^n_+$-monotone, then $\mathcal{K}_{A,\mathcal{L}^n_+}$ is an ellipsoidal cone.
\end{itemize}
\end{theorem}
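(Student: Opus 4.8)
The plan is to exploit Theorem~\ref{ellip}, which says a proper cone is ellipsoidal precisely when it equals $B\mathcal{L}^n_+$ for some invertible $B$, and to handle the three parts by reducing everything to linear-algebraic bookkeeping with inverses of matrices applied to $\mathcal{L}^n_+$.

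For part (i), recall $\mathcal{S}_{A,\mathcal{L}^n_+}=A^{-1}(\mathcal{L}^n_+)$ whenever $A$ is invertible, so in that case $\mathcal{S}_{A,\mathcal{L}^n_+}=A^{-1}\mathcal{L}^n_+$ is immediately of the form $B\mathcal{L}^n_+$ with $B=A^{-1}$ invertible, hence ellipsoidal by Theorem~\ref{ellip}. For the converse I would argue contrapositively: if $A$ is singular, then $\ker A\neq\{0\}$, and since $0\in\mathcal{L}^n_+$ every $v\in\ker A$ satisfies $Av=0\in\mathcal{L}^n_+$, so $\ker A\subseteq\mathcal{S}_{A,\mathcal{L}^n_+}$. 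A cone containing a nontrivial subspace is not pointed, hence not proper, hence cannot be ellipsoidal (ellipsoidal cones are proper by definition). So $\mathcal{S}_{A,\mathcal{L}^n_+}$ ellipsoidal forces $A$ nonsingular.

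For part (ii), given an ellipsoidal cone $K$, Theorem~\ref{ellip} provides an invertible $X$ with $K=X\mathcal{L}^n_+$. I want an invertible $A$ with $A^{-1}\mathcal{L}^n_+=K=X\mathcal{L}^n_+$, so the natural choice is $A=X^{-1}$; then $\mathcal{S}_{A,\mathcal{L}^n_+}=A^{-1}(\mathcal{L}^n_+)=X\mathcal{L}^n_+=K$, using that $A$ is invertible so the preimage is exactly $A^{-1}(\mathcal{L}^n_+)$. For part (iii), if $A$ is $\mathcal{L}^n_+$-monotone then by Theorem~\ref{monotone} $A$ is nonsingular (and $A^{-1}$ leaves $\mathcal{L}^n_+$ invariant); the remark in the text preceding the theorem notes that for a $K$-monotone matrix the cones $\mathcal{S}_{A,K}$ and $\mathcal{K}_{A,K}$ coincide. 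Hence $\mathcal{K}_{A,\mathcal{L}^n_+}=\mathcal{S}_{A,\mathcal{L}^n_+}=A^{-1}\mathcal{L}^n_+$, which is ellipsoidal by part (i) since $A$ is nonsingular.

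The only genuine subtlety — the "main obstacle," such as it is — is making sure the cone-theoretic side conditions are in place: that $A^{-1}\mathcal{L}^n_+$ is actually a \emph{proper} cone (closed, pointed, solid) so that Theorem~\ref{ellip} applies, which follows because invertible linear images preserve each of these properties; and that in the singular case the presence of a line through the origin genuinely obstructs ellipsoidality, which I would justify by noting ellipsoidal cones are proper and proper cones are pointed. Everything else is a direct substitution into $\mathcal{S}_{A,\mathcal{L}^n_+}=A^{-1}(\mathcal{L}^n_+)$ together with the two cited theorems and the $K$-monotone remark.
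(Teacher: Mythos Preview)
Your proposal is correct and follows essentially the same route as the paper: for (i) you use $\mathcal{S}_{A,\mathcal{L}^n_+}=A^{-1}\mathcal{L}^n_+$ together with Theorem~\ref{ellip} (after noting properness) and the pointedness obstruction from $\ker A$ in the singular case; for (ii) you take $A=X^{-1}$ where $K=X\mathcal{L}^n_+$; and for (iii) you invoke Theorem~\ref{monotone} plus the observation $\mathcal{K}_{A,K}=\mathcal{S}_{A,K}$ for $K$-monotone $A$, exactly as the paper does (the paper just spells out $\mathcal{L}^n_+\cap A^{-1}\mathcal{L}^n_+=A^{-1}\mathcal{L}^n_+$ explicitly via $A^{-1}\in\pi(\mathcal{L}^n_+)$ rather than citing the earlier remark).
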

\begin{proof}
(i) Let $\mathcal{S}_{A,\mathcal{L}^n_+}$ be an ellipsoidal cone. If $A$ is singular, there exists $x\neq 0$ such that $Ax=0$, then $x,-x\in \mathcal{S}_{A,\mathcal{L}^n_+}$,which contradicts the pointedness of $\mathcal{S}_{A,\mathcal{L}^n_+}$. Conversely, let $A$ be non-singular. Then $\mathcal{S}_{A,\mathcal{L}^n_+}=A^{-1}\mathcal{L}^n_+$, and hence by Theorem \ref{ellip} it suffices to show that $\mathcal{S}_{A,\mathcal{L}^n_+}$ is a proper cone.

 As $A$ is continuous, and $\mathcal{L}^n_+$ is closed, so $A^{-1}\mathcal{L}^n_+=\mathcal{S}_{A,\mathcal{L}^n_+}$ is closed. Also $A^{-1}\mathcal{L}^n_+$ is convex, since $\mathcal{L}^n_+$ is convex.
 
 Let $x\in A^{-1}\mathcal{L}^n_+\cap \left(-A^{-1}\mathcal{L}^n_+\right)=A^{-1}\left(\mathcal{L}^n_+\cap -\mathcal{L}^n_+ \right)=0$.  Thus $\mathcal{S}_{A,\mathcal{L}^n_+}$ is pointed.
 
 Also, $\mathcal{L}^n_+$ is solid implies that $\mathcal{S}_{A,\mathcal{L}^n_+}$ is solid. Therefore,  $\mathcal{S}_{A,\mathcal{L}^n_+}$ is a proper cone.

(ii) Let $K$ be an ellipsoidal cone, then there exists a non-singular matrix $T$ such that $K=T\mathcal{L}^n_+$. Taking $A=T^{-1}$ we have $\mathcal{S}_{A,\mathcal{L}^n_+}=K$.

(iii) If $A$ is $\mathcal{L}^n_+$-monotone, then by Theorem \ref{monotone}, $A$ is non-singular and $A^{-1}\in \pi(\mathcal{L}^n_+)$, which implies that 
\[\mathcal{K}_{A,\mathcal{L}^n_+}=\mathcal{L}^n_+\cap \mathcal{S}_{A,\mathcal{L}^n_+}= \mathcal{L}^n_+\cap A^{-1}\mathcal{L}^n_+=A^{-1}\mathcal{L}^n_+= \mathcal{S}_{A,\mathcal{L}^n_+}\] 
Thus $\mathcal{K}_{A,\mathcal{L}^n_+}$ is an ellipsoidal cone.
\end{proof}

\begin{corollary}
$A$ is $\mathcal{L}^n_+$-monotone if and only if $A$ is invertible and  $\mathcal{K}_{A,\mathcal{L}^n_+}=\mathcal{S}_{A,\mathcal{L}^n_+}$  is an ellipsoidal cone.
\end{corollary}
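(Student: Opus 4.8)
The plan is to read this off from Theorem~\ref{monotone} together with part~(iii) of the preceding theorem, after recording one elementary observation: for an invertible matrix $A$, the identity $\mathcal{K}_{A,\mathcal{L}^n_+}=\mathcal{S}_{A,\mathcal{L}^n_+}$ is equivalent to the inclusion $A^{-1}\mathcal{L}^n_+\subseteq\mathcal{L}^n_+$, i.e.\ to $A^{-1}\in\pi(\mathcal{L}^n_+)$. Indeed, when $A$ is invertible we have $\mathcal{S}_{A,\mathcal{L}^n_+}=A^{-1}(\mathcal{L}^n_+)$ and $\mathcal{K}_{A,\mathcal{L}^n_+}=\mathcal{L}^n_+\cap A^{-1}(\mathcal{L}^n_+)$, so the two cones coincide precisely when $A^{-1}(\mathcal{L}^n_+)\subseteq\mathcal{L}^n_+$.

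For the forward implication I would suppose $A$ is $\mathcal{L}^n_+$-monotone. Theorem~\ref{monotone} gives that $A$ is non-singular and $A^{-1}\in\pi(\mathcal{L}^n_+)$, and part~(iii) of the preceding theorem then yields that $\mathcal{K}_{A,\mathcal{L}^n_+}=\mathcal{S}_{A,\mathcal{L}^n_+}=A^{-1}\mathcal{L}^n_+$ and that this common cone is ellipsoidal. For the converse, assume $A$ is invertible and $\mathcal{K}_{A,\mathcal{L}^n_+}=\mathcal{S}_{A,\mathcal{L}^n_+}$ is an ellipsoidal cone. By the observation above the equality of the two cones forces $A^{-1}\in\pi(\mathcal{L}^n_+)$, and since $A$ is non-singular, Theorem~\ref{monotone} immediately gives that $A$ is $\mathcal{L}^n_+$-monotone.

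There is essentially no obstacle here; the only point requiring care is that the representation $\mathcal{S}_{A,\mathcal{L}^n_+}=A^{-1}(\mathcal{L}^n_+)$ — and hence the observation linking equality of the two cones to $A^{-1}\in\pi(\mathcal{L}^n_+)$ — uses the invertibility of $A$, which is why that hypothesis appears explicitly on the right-hand side of the equivalence. I would also remark that the ellipsoidality assumption in the converse is not actually needed (equality of the two cones plus invertibility already suffices), but is retained to make the statement a clean mirror of part~(iii) of the theorem.
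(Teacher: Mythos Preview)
Your proof is correct and follows exactly the approach implicit in the paper, which states the corollary without proof as an immediate consequence of Theorem~\ref{monotone} and part~(iii) of the preceding theorem. Your remark that the ellipsoidality hypothesis in the converse is superfluous is also apt; in fact, by part~(i) of that same theorem, ellipsoidality of $\mathcal{S}_{A,\mathcal{L}^n_+}$ already forces invertibility of $A$, so the right-hand side carries some redundancy either way.
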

\section{Conclusion}\label{sec5} In this paper, we have studied algebraic properties of $\mathcal{L}^n_+$-semipositive matrices. It is observed that the $n$-th row and $n$-th column play important roles to provide necessary (or sufficient) conditions for semipositivity of matrices over $\mathcal{L}^n_+$. We further noticed that diagonal and orthogonal $\mathcal{L}^n_+$-semipositive matrices can be characterized with the $(n,n)$-th entry of the matrix. Also, we have found sufficient conditions for lower triangular matrices to be $\mathcal{L}^n_+$-semipositive matrices. Given a non-singular matrix $A$, and a proper cone $K$, we have examined the geometric properties of the semipositive cone $\mathcal{K}_{A,K}$ and of the cone $\mathcal{S}_{A,K}$ in terms of their extremals. In particular, it is found that extremals of $K$ and $\mathcal{S}_{A,K}$ are connected via $A^{-1}$. Lastly, we have obtained sufficient conditions for the two specific cones $\mathcal{S}_{A,\mathcal{L}^n_+}$ and $\mathcal{S}_{A,\mathcal{L}^n_+}$ to be ellipsoidal. 

\section*{Acknowledgement} The authors are grateful to the anonymous reviewers for their positive criticism and valuable suggestions on the work, that significantly improved the quality of the paper.

\end{document}